\numberwithin{equation}{section}
\long\def\eatit#1{}
\newtheorem{Thm}{Theorem}[section]
\newtheorem{Prop}[Thm]{Proposition}
\theoremstyle{definition}
\newtheorem{Def}[Thm]{Definition}
\newtheorem{Ex}[Thm]{Example}
\newtheorem{Rmk}[Thm]{Remark}
\newcommand{\PP}{{\mathbb{P}}}
\newcommand{\CC}{{\mathbb{C}}}
\begin{document}


\title[Geometry of tensors for modeling rater agreement data]{Algebra and geometry of tensors for modeling rater agreement data}

\author{Cristiano Bocci}
\address{Department of Information Engineering and Mathematics, University of Siena, Via Roma 56, 53100 Siena, Italy}
\email{cristiano.bocci@unisi.it}
\author{Fabio Rapallo}
\address{Department of Science and Technological Innovation, University of Piemonte Orientale, Via Teresa Michel  11, 15121 Alessandria, Italy}
\email{fabio.rapallo@uniupo.it}

\begin{abstract} We study three different quasi-symmetry models and three different mixture models of $n\times n\times n$ tensors for modeling rater agreement data. For these models we give a geometric description of the associated varieties and we study their invariants distinguishing between the case $n=2$ and the case $n>2$. Finally, for the two models for pairwise agreement we state some results about the pairwise Cohen's $\kappa$ coefficients.

\end{abstract}

\maketitle

\section{Introduction}

The problem of rater agreement analysis is a classical topic in Statistics. It can be roughly described as follows. Let us consider $k$ raters which classify $N$ experimental units on the basis of a categorical random variable with $n$ levels. The result of such a process is a $k$-way contingency table which summarizes the counts of the joint rates. From the point of view of Statistics, the problem is to verify if there is agreement among the raters, and to measure the strength of the agreement. For a general introduction to rater agreement analysis, the reader can refer to \cite{von2004analyzing}.

In the case of two raters ($k=2$), there are simple indices to measure the rater agreement, the first one in historical order being the Cohen's $\kappa$, see \cite{cohen1960}. From the Cohen's $\kappa$, several other measures have been introduced and studied in the literature, as for instance the Scott's $\pi$ (which assumes homogeneous marginal distributions of the raters), and the Krippendorff's $\alpha$ originally defined in the framework of content analysis, see \cite{krip1, krip2}. All such measures aims at distinguish the structural agreement from the agreement by chance. We will consider the Cohen's $\kappa$ later in the paper. It is known that the analyses based on these measures of agreement are difficult to explain. For instance, the Cohen's $\kappa$ is equal to $0$ in case of no agreement and is equal to $1$ in case of perfect agreement, but it is hard (and often questionable) to comment about intermediate values of $\kappa$, see the discussion in, e.g., \cite{guggenmoos:93} and the references therein. In this context, even paradoxes are known in literature, see \cite{flight:15}. Moreover, it is problematic to extend such indices to the case of $k>2$ raters. An example in this direction is the Fleiss' $\kappa$, a generalization of the Scott's $\pi$ to more than two raters. A recent extensive review of such indices can be found in \cite{zapf2016}.

To improve the statistical modeling of rater agreement, several directions have been proposed in the literature. A first approach considers the use of log-linear models. For a general reference about log-linear models for contingency tables we refer to \cite{agresti:13, kateri:14}. The first use of log-linear model to model rater agreement dates back to \cite{tanner|young:85}. In particular, we will make use of the quasi-independence model. In the case of two raters, a probability distribution is written as a square $n \times n$ table with generic element $P_{ij}$ (i.e., $P_{ij}$ is the probability of row $i$ and column $j$). In such a case, the quasi-independence model assumes the form:
\begin{equation} \label{qi2}
\log (P_{ij}) = \mu + \lambda^{(1)}_i + \lambda^{(2)}_j + \gamma_i \delta_{(i=j)} \, ,
\end{equation}
where $\mu$ is a global parameter, $\lambda^{(1)}_i$ is the contribution of the $i$-th row, $\lambda^{(2)}_j$ is the contribution of the $j$-th column, and $\gamma_i \delta_{(i=j)}$ is the contribution of the $i$-th diagonal entry (here $\delta$ is the Kronecker symbol). In contrast with the independence log-linear model
\[
\log (P_{ij}) = \mu + \lambda^{(1)}_i + \lambda^{(2)}_j \, ,
\]
it is immediate to see that the two models differ in the expression of the diagonal entries, i.e, the entries of the table where there is agreement among the two raters, and therefore quasi-independence is a good choice to model rater agreement. See \cite{agresti:13} for further details on the quasi-independence model. Other log-linear models commonly used in this field are quasi-symmetry and uniform association models. Another important class of models for this application is the class of mixture models. The connections between quasi-independence and mixture models in the context of rater agreement were studied first in \cite{schuster:02}. We will analyze also mixture models in this paper. When the categorical classification variable is ordinal one can take advantage of this special feature of the classification, and latent class models can be defined, see for instance \cite{nelson:15}. The method proposed there assumes that the $n$ categories of each rater are a discretization of a continuous (Gaussian) underlying classification variable.

In this paper we study the geometry of statistical models for modeling rater agreement in the case of multiple ($k>2$) raters. The varieties associated to these models lie in the space of tensors. In particular, since we mainly focus on the case of $k=3$ raters and categorical random variables with $n$ levels, these varieties lie in $\PP^N$, where $N=n^3-1$. We introduce different models, including models for perfect agreement and models for pairwise agreement. In Section \ref{sgeom} we describe the varieties associated to the  models, introduced in Section \ref{sectionmodels}, in terms of join of varieties \cite{Harris:92} or Hadamard product of varieties \cite{BCK16}. This gives a complete characterization of these varieties from the geometric point of view.
Then, we pass, in Sections \ref{sinv1} and \ref{sinv2}, to study the invariants of the models, i.e. the generators of the ideals of the associated varieties. As expected, the ideals for the quasi-independence models are toric, while the ones for the mixture models are more complicated and, in some situations, it makes the geometric approach not manageable for practical applications. In Section \ref{inclusionsection}, we compare the varieties and we establish some inclusion result. In Section \ref{pairwisekappa} we come back to the statistical problem, and we study how pairwise Cohen's $\kappa$ coefficient can be computed as functions of the model parameters. In the simplest case of equal and uniform marginal distributions, we derive explicit formulae, while in the general case we use Maple${}^\mathrm{TM}$ to do computations. Finally, Section \ref{discussion} is devoted to some final comments.

\section{Models}\label{sectionmodels}

A probability distribution on a finite sample space with $k$ elements is a normalized vector of $k$ non-negative real
numbers. Thus, the most general probability model is the simplex
\begin{equation*} 
\Delta_k = \left\{(P_{1}, \ldots, P_{k}) \ : \ P_{i} \geq 0 \ , \
\sum_{i=1}^k P_{i} = 1   \right\} \, .
\end{equation*}
A statistical model ${\mathcal M}$ is defined to be a subset of
$\Delta_k$. When ${\mathcal M}$ is defined through algebraic equations, the
model ${\mathcal M}$ is said to be an algebraic statistical model.

Given a set of polynomials $F$ with indeterminates $P_1, \ldots,
P_k$, $F$ defines a variety $V(F)$, that is the set of points in
${\mathbb R}^k$ where all polynomials in $F$ vanish. Following
\cite{drton|sturmfels|sullivant:09}, an algebraic statistical
model defined by a list of polynomials $F$, is the set ${\mathcal
M}=V(F) \cap \Delta_k$.

In this paper we consider statistical models for the analysis of rater agreement with 3 raters. All raters use the same nominal rating scale, so that the sample space is naturally a cartesian product of the form $\{1, \ldots, n\}^3$. As a consequence we denote the probabilities $P_{ijk}$ by means of three indices, where $P_{ijk}$ is the probability that an object is rated in the category $i$ by the first rater, in category $j$ by the second rater, and in category $k$ by the third rater.

In this section we describe six different models built over tensors of type $n\times n\times n$.

\begin{Def}\label{Qindef}
The model of quasi-independence $QI_n$ is the set of probability tensors $P\in \Delta_{n\times n \times n}$ such that
\begin{equation}\label{QIeq}
\begin {cases}
P_{ijk}=\zeta\gamma_ia_ib_jc_k \ \mbox{ for } i=j=k\\
P_{ijk}=\zeta a_ib_jc_k \ \mbox{ otherwise }
\end{cases}
\end{equation}
where ${\bm \gamma}, {\mathbf a}, {\mathbf b}, {\mathbf c}$ are non-negative vectors of length  $n$, and $\zeta$ is the normalizing constant.
\end{Def}

\begin{Def}\label{Mixndef}
The mixture model $Mix_n$ is the set of  probability tensors $P\in \Delta_{n\times n \times n}$ such that:
\begin{equation}\label{Mixeq}
\begin{cases}
P_{ijk}=\alpha a_ib_jc_k+(1-\alpha)d_i \ \mbox{ for } i=j=k\\
P_{ijk}=\alpha a_ib_jc_k \ \mbox{ otherwise}.
\end{cases}
\end{equation}
where ${\mathbf a}, {\mathbf b}, {\mathbf c}, {\mathbf d}$ are non-negative vectors of length  $n$ with sum equal to one, and $\alpha \in [0,1]$.
\end{Def}

We can notice that the vector ${\mathbf d}$ can be viewed also as a diagonal probability tensor  $D$  of type $n \times n \times n$.

Imposing the condition $\gamma_1=\cdots =\gamma_n=\gamma$ for the model $QI_n$ we get a new model.

\begin{Def}\label{qindef}
The model  $qI_n$  is the set of  probability tensors $P\in \Delta_{n\times n \times n}$ such that:
\begin{equation}\label{qIeq}
\begin{cases}
P_{ijk}=\zeta \gamma a_i b_j c_k \ \mbox{ for } i=j=k \\
P_{ijk}=\zeta a_i b_j c_k \ \mbox{ otherwise }
\end{cases}
\end{equation}
where ${\mathbf a}, {\mathbf b}, {\mathbf c}$ are non-negative vectors of length  $n$, $\gamma$ is a non-negative number, and $\zeta$ is the normalizing constant.
\end{Def}

The only difference between $QI_n$ and  $qI_n$ lies on the elements in the diagonal: for $qI_n$ there is a contribution given by the same value $\gamma$, while for $QI_n$ the contribution in the $i$-th diagonal entry is given by $\gamma_i$, and two of them $\gamma_i$ and $\gamma_j$ are not necessarily equal. Clearly  $qI_n\subset QI_n$.

Imposing the condition $d_1=\cdots =d_n=d$ for the model $Mix_n$, and thus $d=\frac{1}{n}$, we get a new mixture model.

\begin{Def}\label{mixndef}
The model $mix_n$ is the set of  probability tensors $P\in \Delta_{n\times n \times n}$ such that:
\begin{equation}\label{mixeq}
\begin{cases}
P_{ijk}=\alpha a_i b_j c_k + (1-\alpha)\frac 1 n \ \mbox{ for } i=j=k \\
P_{ijk}=\alpha a_i b_j c_k \ \mbox{ otherwise }
\end{cases}
\end{equation}
where ${\mathbf a}, {\mathbf b}, {\mathbf c}$ are non-negative vectors of length  $n$ with sum equal to one.
\end{Def}
Also in this case one has $mix_n\subset Mix_n$ and the two  models  differ only for the elements in the main diagonal. We can see the non-negative number $d=1/n$ as forming a diagonal probability tensor  $D$ of type $n \times n \times n$.

\begin{Def}\label{pqidef}
The model  of pairwise quasi-independence $p$-$qI_n$ is the set of  probability tensors $P\in \Delta_{n\times n \times n}$ such that
\begin{equation}\label{pqieq}
\begin {cases}
P_{ijk}=\zeta a_ib_jc_k \ \mbox{ if } i,j,k \mbox{ are pairwise distinct}\\
P_{ijk}=\zeta \gamma^{(12)}a_ib_jc_k \ \mbox{ if } i=j\not= k\\
P_{ijk}=\zeta \gamma^{(13)}a_ib_jc_k \ \mbox{ if } i=k\not= j\\
P_{ijk}=\zeta \gamma^{(23)}a_ib_jc_k \ \mbox{ if } i\not=k = j\\
P_{ijk}=\zeta \gamma^{(12)}\gamma^{(13)}\gamma^{(23)}a_ib_jc_k \ \mbox{ if } i=j=k
\end{cases}
\end{equation}
where ${\mathbf a}, {\mathbf b}, {\mathbf c}$ are non-negative vectors of length  $n$, $\gamma^{(12)}, \gamma^{(13)}$ and $\gamma^{(23)}$ are non-negative numbers, and $\zeta$ is the normalizing constant.
\end{Def}

In the definition above, the parameter $\gamma^{(12)}$ is a measure of the agreement between the first and the second rater, and similarly for $\gamma^{(13)}$ and $\gamma^{(23)}$.

\begin{Def}\label{pmixdef}
The pairwise mixture model $p$-$mix_n$ is the set of  probability tensors $P\in \Delta_{n\times n \times n}$ such that:
\begin{equation}\label{pmixeq}
P_{ijk}=\alpha^{(0)} a_ib_jc_k+\alpha^{(12)}\frac{\delta_{ij\bullet}}{n^2}+\alpha^{(13)}\frac{\delta_{i\bullet k}}{n^2}+\alpha^{(23)}\frac{\delta_{\bullet jk}}{n^2}+\alpha^{(123)}\frac{\delta_{ ijk}}{n}
\end{equation}
where ${\mathbf a}, {\mathbf b}, {\mathbf c}$ are non-negative vectors of length  $n$  with sum equal to one, the $\alpha$'s are such that  $(\alpha^{(0)}, \alpha^{(12)}, \alpha^{(13)}, \alpha^{(23)}, \alpha^{(123)}) \in \Delta_5$,  and $\delta$'s are Kronecker symbols.
\end{Def}

\begin{Rmk}
Notice that while in Definitions \ref{Qindef}, \ref{qindef} and \ref{pqidef} the normalization
is applied once, in Definitions \ref{Mixndef}, \ref{mixndef} and \ref{pmixdef} the normalization
is applied twice as we require that each summand must belong to $\Delta_{n\times n \times n}$.
\end{Rmk}

\begin{Rmk}
The models introduced in the Definitions \ref{pqidef} and \ref{pmixdef} are the pairwise extensions of, respectively, the models in Definitions  \ref{qindef} and  \ref{mixndef}.
Similarly, it would be possible to define the pairwise extensions of the models in Definitions  \ref{Qindef} and  \ref{Mixndef}, but they are less common in statistical applications and therefore we omit their study.
\end{Rmk}


\section{Geometry of the models}\label{sgeom}

In this section we describe the varieties associated to the models introduced above.

We work in the projective setting (over $\CC$), using $\PP^{N}$ as ambient space, where $N=n^3-1$ and the homogenous coordinates are $[P_{111}, P_{112}, \dots, P_{ijk}, \dots , P_{nnn}]$ with $1\leq, i,j,k\leq n$. In this way a tensor of type $n\times n\times n$ can be seen as a point in $\PP^N$. If $A\subset \PP^N$, then we denote by $\overline{A}$ the Zariski closure of $A$ in $\PP^N$.

First we briefly recall some definitions.

\begin{Def} The Segre map $s_{n_1,n_2, \dots, n_t}$ is defined as the parametric map
{\small{\[
\begin{array}{cccc}
s_{n_1,n_2, \dots,n_t}:&\PP^{n_1}\times \PP^{n_2}\times \cdots \times \PP^{n_t} & \to& \PP^M\\
&( [a_{1,0}, \dots, a_{1,n_1}],\dots [a_{t,0}, \dots, a_{t,n_t}])& \mapsto & [\dots, a_{1,i_1}a_{2,i_2}\cdots a_{t,i_t}\dots]
\end{array}
\]}}
where $[a_{k,0}, \dots, a_{k,n_k}]$ are the coordinates of $\PP^{n_k}$, for $k=1,\dots, t$ and $M=\prod_{i=1}^t(n_i+1)-1$.

The image of $\PP^{n_1}\times \PP^{n_2}\times \cdots \times \PP^{n_t} $ under $s_{n_1,n_2, \dots, n_t}$ is a projective variety which is called the Segre embedding of $\PP^{n_1}\times \PP^{n_2}\times \cdots \times \PP^{n_t}$ and denoted by $X_{n_1,n_2, \dots, n_t}$.
\end{Def}

In our models there is always a contribution in the parameterization given by $a_ib_jc_k$. This can be seen as the Segre embedding $X_{n-1,n-1,n-1}$ of $\PP^{n-1}\times \PP^{n-1} \times \PP^{n-1}$ in $\PP^N$, where the coordinates are $[a_1, \dots, a_n]$, $[b_1, \dots, b_n]$ and $[c_1, \dots, c_n]$.

\begin{Def}
Let $X$ and $Y$ be two projective varieties in $\PP^n$. The join of $X$ and $Y$, denoted by $J(X,Y)$, is the variety defined as
\[
J(X,Y)=\overline{\{\lambda Q+ \mu R\, : \, Q\in X, R\in Y, \lambda,\mu \in \CC^*\}}
\]
\end{Def}

We can use the previous definition to describe the models $Mix_n$, $mix_n$ and $p$-$mix_n$. Let us start analyzing the model $Mix_n$. Every element here can be seen as a sum of a tensor in the independence model with a diagonal tensor. Hence
\[
V_{Mix_n}=J(X_{n-1,n-1,n-1}, {\mathcal{D}}).
\]
where ${\mathcal{D}}$ is the variety of diagonal tensors, defined as the zero set of the following equations
\[
\{P_{ijk}=0, \mbox{ for } i,j,k \mbox{ not all equal}
\}.
\]

In the case of $mix_n$, every element  is the sum of a tensor in the independence model with the fixed diagonal tensor $D$ whose diagonal entries are $\frac1n$. Hence, by standard  consideration in Algebraic Geometry, $V_{mix_n}$ is the cone with vertex $D$ over the Segre embedding $X_{n-1,n-1,n-1}$.

Finally, for $p$-$mix_n$ let $Q_1$, $Q_2$, $Q_3$ and $Q_4$ be the points in $\PP^N$ representing respectively the tensors $\frac{\delta_{ij\bullet}}{n^2}$, $\frac{\delta_{i\bullet k}}{n^2}$, $\frac{\delta_{\bullet jk}}{n^2}$ and $\frac{\delta_{ijk}}{n}$. Since every element in $p$-$mix_n$ is a linear combination of $Q_1$, $Q_2$, $Q_3$, $Q_4$ and a point in the independence model,  one has
\[
V_{p\mbox{-}mix_n}=J(J(J(J(X_{n-1,n-1,n-1}, Q_1),Q_2),Q_3)Q_4)
\]
that, by a generalization of the argument above, is the cone with vertex the linear span $\langle Q_1,Q_2,Q_3,Q_4\rangle$ over the Segre embedding $X_{n-1,n-1,n-1}$.

To describe the models $QI_n$, $qI_n$ and $p$-$qI_n$ we need to introduce the definition of Hadamard product of varieties. This kind of product was used in \cite{CMS10, CTY10} to describe the algebraic variety associated to the restricted Boltzmann machine, which is the undirected graphical model for binary random variables specified by an appropriate bipartite graph. Recently, in the paper \cite{BCK16}, the authors, motivated by the definition of Hadamard product introduced in \cite{CMS10, CTY10}, start the systematic study of properties of such products, focusing mainly in the case of linear varieties.

\begin{Def} Let $p,q\in \PP^n$ be two points of coordinates respectively $[\alpha_0,\ldots,\alpha_n]$ and $[\beta_0,\ldots,\beta_n]$. If $\alpha_i\beta_i\not= 0$ for some $i$, the Hadamard product $p\star q$ of $p$ and $q$ is defined as
\[
p\star q=[\alpha_0\beta_0,\alpha_1\beta_1,\dots,\alpha_n\beta_n].
\]
If  $\alpha_i\beta_i= 0$ for all $i=0,\dots, n$ then we say $p\star q$ is not defined.
\end{Def}

This definition extends to the Hadamard product of varieties in the following way.
\begin{Def}
Let $X$ and $Y$ be two varieties in $\PP^n$, then their Hadamard product $X\star Y$ is defined as

\[X\star Y=\overline{\{p\star q: p\in X, q\in Y, p\star q\mbox{ is defined} \}}.\]
\end{Def}

Hence, for the model $QI_n$, we can notice that a tensor $P\in QI_n$ can be seen as the Hadamard product $P=S\star H$ where $S$ is a tensor in the independence model and $H$ is a tensor with the $\gamma_i$ on the diagonal and 1 elsewhere. This last kind of tensors are described by the linear variety $\mathcal{H}$ with equations
\[
\{P_{ijk}-P_{lmn}=0\,  \mbox{ for } i,j,k \mbox{ not all equal}  \mbox{ and for } l,m,n \mbox{ not all equal}
\}.
\]
Hence one has $V_{QI_n}=X_{n-1,n-1,n-1}\star \mathcal{H}$.

The cases of $qI_n$ and $p$-$qI_n$ are similar, since
\[
\begin{array}{l}
V_{qI_n}=X_{n-1,n-1,n-1}\star \hat{\mathcal{H}}\\
\\
V_{p\mbox{-}qI_n}=X_{n-1,n-1,n-1}\star \tilde{\mathcal{H}}
\end{array}
\]
where $\hat{\mathcal{H}}$ is the linear variety defined by the equations
\[
\left\{\begin{array}{l}
P_{ijk}-P_{lmn}=0\,  \mbox{ for } i,j,k \mbox{ not all equal}  \mbox{ and for } l,m,n \mbox{ not all equal}\\
\\
P_{111}-P_{iii}=0\, \mbox{ for } i=2,\dots, n
\end{array}
\right\}.
\]
and $\tilde{\mathcal{H}}$ is the linear variety defined by the equations
\[
\left\{\begin{array}{l}
P_{11k}-P_{ttk}=0\, \mbox{ for } t=2,\dots, n \mbox{ and } t\not= k\\
P_{1j1}-P_{tjt}=0\, \mbox{ for } t=2,\dots, n \mbox{ and } t\not= j\\
P_{i11}-P_{itt}=0\, \mbox{ for } t=2,\dots, n \mbox{ and } t\not= i\\
P_{111}-P_{iii}=0\, \mbox{ for } i=2,\dots, n
\end{array}
\right\}.
\]

In the next sections we pass to compute the generators of the ideals of the varieties associated to the models, starting from the case $n=2$.

\section{The case $n=2$}\label{sinv1}

The case $n=2$ can be handled by elimination since the number of variables is small.

For the models $QI_2$ and $Mix_2$ we obtain the following result.

\begin{Prop}\label{QI2n2}
The invariants for the models $QI_2$ and $Mix_2$ are the same, and are a subset of the invariants for the complete independence model, namely
$$P_{121}P_{212}-P_{112}P_{221},$$
$$P_{122}P_{211}-P_{121}P_{212}.$$
\end{Prop}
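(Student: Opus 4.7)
My plan rests on the structural observation that, for both $QI_2$ and $Mix_2$, the six off-diagonal entries of $P$ are parameterized as the off-diagonal entries of a single rank-one tensor $(a_ib_jc_k)$ up to a common scalar ($\zeta$ for $QI_2$, $\alpha$ for $Mix_2$), while the two diagonal entries $P_{111}$ and $P_{222}$ are free to vary (through $\gamma_1,\gamma_2$ in \eqref{QIeq} and through $d_1, d_2$ in \eqref{Mixeq}). Consequently, the two model varieties should coincide, and their common defining ideal should be the \emph{off-diagonal} part of the Segre ideal of $X_{1,1,1}$. The proposition then reduces to identifying which Segre binomials involve only off-diagonal coordinates.

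First I would verify membership by direct substitution: plugging either parameterization into the two binomials gives $\zeta^{2} a_1 a_2 b_1 b_2 c_1 c_2 - \zeta^{2} a_1 a_2 b_1 b_2 c_1 c_2 = 0$ (respectively with $\alpha$ in place of $\zeta$), so both polynomials lie in $I(V_{QI_2}) \cap I(V_{Mix_2})$. Next I would compute dimensions: after quotienting out by the obvious scaling redundancies, the parameter spaces for $QI_2$ and $Mix_2$ both have dimension $5$, so $V_{QI_2}$ and $V_{Mix_2}$ are both $5$-dimensional subvarieties of $\PP^{7}$.

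For completeness, let $I = (P_{121}P_{212} - P_{112}P_{221},\, P_{122}P_{211} - P_{121}P_{212})$. Since $I$ involves only the six off-diagonal coordinates, $V(I)\subset\PP^{7}$ is the join of the vanishing locus $V(I')\subset\PP^{5}$ of the same two binomials in the off-diagonal variables with the $\PP^{1}$ spanned by the diagonal coordinate points. The off-diagonal parameterization $(a,b,c)\mapsto (a_ib_jc_k)_{ijk \text{ not all equal}}$ is monomial, so $I'$ is a toric ideal whose lattice of relations has rank $2$. A direct check shows that the two relation vectors $(-1,1,0,0,1,-1)$ and $(0,-1,1,1,-1,0)$ coming from our binomials form a $\ZZ$-basis of this rank-$2$ lattice, so $I'$ is the full toric ideal and hence prime; thus $V(I')$ is irreducible of dimension $3$ and $V(I)$ is irreducible of dimension $5$. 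Since $V_{QI_2}$ and $V_{Mix_2}$ are irreducible $5$-dimensional subvarieties of $V(I)$, all three varieties must coincide, and in particular their ideals agree and are generated by the two stated binomials. The fact that these binomials also lie in the ideal of the complete independence variety $X_{1,1,1}$ is then immediate from the parameterization.

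The main potential obstacle is the lattice-basis step, namely showing that our two specific integer vectors actually generate the full rank-$2$ kernel of the exponent matrix over $\ZZ$ (and not just a finite-index sublattice, which would still define the right variety set-theoretically but a strictly larger ideal). This is a small linear-algebra computation on a $6\times 6$ matrix. Alternatively, as the authors explicitly note, because the number of variables is small one can simply carry out the elimination of parameters for both models in a computer algebra system and read off the two generators as the reduced Gr\"obner basis of each elimination ideal, obtaining the statement directly without the geometric detour.
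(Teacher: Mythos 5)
The paper offers no real proof here beyond the remark that ``the case $n=2$ can be handled by elimination,'' so your structural route --- both models have off-diagonal entries equal to those of a rank-one tensor up to a global scalar and two unconstrained diagonal entries, hence both closures equal the join of the projected Segre threefold with the diagonal $\PP^1$ --- is genuinely different and more informative. The membership check, the dimension counts, the join decomposition of $V(I)$, and the verification that $(-1,1,0,0,1,-1)$ and $(0,-1,1,1,-1,0)$ span the rank-$2$ kernel of the exponent matrix are all correct (the $2\times 2$ minor on the first two columns is $1$, so they do generate the full saturated lattice).

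The gap is the inference ``the two relation vectors form a $\ZZ$-basis of this rank-$2$ lattice, so $I'$ is the full toric ideal and hence prime,'' and you have also misdiagnosed where the danger lies. Even when the vectors are an honest $\ZZ$-basis of the full kernel lattice (not merely of a finite-index sublattice), the ideal generated by the corresponding binomials --- the lattice basis ideal --- is in general strictly \emph{smaller} than the toric ideal $I_A$; one only has $I_A = \bigl((f_1,f_2) : (\textstyle\prod_{ijk} P_{ijk})^\infty\bigr)$, so the two ideals agree only after removing components supported on the coordinate hyperplanes. (The twisted cubic is the standard example: the lattice basis binomials $xz-y^2,\ yw-z^2$ cut out the cubic plus an extra line and do not generate its prime ideal.) So from your argument one may conclude only that $V(I')$ contains the projected Segre as a component, not that $I'$ is prime. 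The conclusion is nevertheless true and can be rescued with one more step: $(f_1,f_2)$ is a codimension-$2$ complete intersection in $\PP^5$, hence unmixed of degree $4$; the projected Segre is the image of the degree-$6$ variety $X_{1,1,1}$ under projection from the secant line $\langle e_{111},e_{222}\rangle$, which meets it in exactly two smooth points and is birational onto its image, so the image also has degree $6-2=4$; comparing degrees forces $V(f_1,f_2)$ to coincide with the projected Segre as a cycle, and Cohen--Macaulayness plus generic reducedness then gives $(f_1,f_2)=I'$. Without some such argument (or an explicit saturation/elimination computation, which is what the authors actually rely on), the primality of $I$ and hence the final identification of the three varieties' ideals is unjustified.
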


\begin{Rmk}
$QI_2$ is a toric variety while $Mix_2$ is a subset of $QI_2$. We will discuss this issue later.
\end{Rmk}

Let us turn to $qI_2$ (by definition, we set $\gamma_1 = \gamma_2 = \gamma$ in $QI_2$).
We get the following invariants:
$$P_{121}P_{212}-P_{112}P_{221},$$
$$P_{122}P_{211}-P_{121}P_{212},$$
$$P_{111}P_{212}P_{221}^2-P_{121}P_{211}^2P_{222},$$
$$P_{111}P_{122}P_{221}^2-P_{121}^2P_{211}P_{222},$$
$$P_{111}P_{212}^2P_{221}-P_{112}P_{211}^2P_{222},$$
$$P_{111}P_{122}^2P_{221}-P_{112}P_{121}^2P_{222},$$
$$P_{111}P_{122}P_{212}^2-P_{112}^2P_{211}P_{222},$$
$$P_{111}P_{122}^2P_{212}-P_{112}^2P_{121}P_{222},$$
$$P_{111}P_{112}P_{221}^3-P_{121}^2P_{211}^2P_{222},$$
$$P_{111}P_{122}P_{212}P_{221}-P_{112}P_{121}P_{211}P_{222}.$$

The first two invariants are the ones of the model $QI_2$; moreover we get eight new invariants that we pass to describe.
Let us consider two monomials
$$ f=P_{111}^{\alpha_{111}}P_{112}^{\alpha_{112}}\cdots P_{222}^{\alpha_{222}},$$
$$ g=P_{111}^{\beta_{111}}P_{112}^{\beta_{112}}\cdots P_{222}^{\beta_{222}}.$$
Once we have established the degree $\delta$ of the invariant  we want to find, we must impose that the sum of all the exponents of each monomial be equal to the degree because the relevant toric ideal is clearly homogeneous:
$$\sum_{i=1}^{2}\sum_{j=1}^{2}\sum_{k=1}^{2} \alpha_{ijk}=\sum_{i=1}^{2}\sum_{j=1}^{2}\sum_{k=1}^{2} \beta_{ijk}=\delta.$$

Observing the generators written earlier, we note that if the term $ P_ {111} $ belongs to one monomial then the term $ P_ {222} $ belongs to the other monomial; this happens because $\gamma$ appears only in these two terms. So if we want to find an irreducible binomial $f-g$ with $f$ containing $ P_ {111} $, then $g$ must contain $P_ {222}$ with the same exponent of $ P_ {111} $ in $f$.

Let us consider a $2 \times 3$ matrix $F$, associated with the monomial $f$, defined by
\[
F_{t,s}=\mbox{number of occurrences of } t \mbox{ in the } s\mbox{-th index}
\]
where the occurrences are counted with multiplicity with respect to the exponents $\alpha_{ijk}$ of each variable in $f$. Similarly we have a $2 \times 3$ matrix $G$, associated with the monomial $g$.

\begin{Ex}\label{esempiomatrici1}\rm
Let $\delta=4$ and consider the monomials $f=P_{111}P_{212}P_{221}^2, g=P_{121}P_{211}^2P_{222}$. The associated matrices $F$ e $G$ are:
$$
F = \left(
\begin{array}{ccc}
1 & 2 & 3 \\
3 & 2 & 1
\end{array}
\right)
$$
$$
G = \left(
\begin{array}{ccc}
1 & 2 & 3 \\
3 & 2 & 1
\end{array}
\right)
$$
\end{Ex}

We can state the following result.
\begin{Prop}\label{propmatrici1}
Let $f,g$ be two coprime monomials with associated matrices $F$ and $G$ such that the degree of $P_{111}$ in $f$ is equal to the degree of $P_{222}$ in $g$. Then $F-G=0$ if and only if $ f-g \in I$ where $I \subset K[P_{111},P_{112},\dots ,P_{222}]$ is the ideal associated to the model $qI_2$.
\end{Prop}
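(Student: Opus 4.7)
The plan is to exploit the fact that $qI_2$ is parameterized by monomials, so its defining ideal is the kernel of a single ring map, and then reduce the binomial membership condition $f-g\in I$ to a componentwise comparison of exponents. Write the parameterization as the $K$-algebra map
\[
\phi\colon K[P_{111},\dots,P_{222}]\longrightarrow K[\zeta,\gamma,a_1,a_2,b_1,b_2,c_1,c_2]
\]
sending $P_{iii}\mapsto \zeta\gamma\,a_ib_ic_i$ and $P_{ijk}\mapsto \zeta\,a_ib_jc_k$ for $(i,j,k)\ne(1,1,1),(2,2,2)$; then $I=\ker\phi$ is a prime toric ideal, and a binomial $f-g$ (with $f=\prod P_{ijk}^{\alpha_{ijk}}$, $g=\prod P_{ijk}^{\beta_{ijk}}$ coprime) lies in $I$ if and only if $\phi(f)=\phi(g)$.

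The central computation is to read off directly from the definitions:
\[
\phi(f)=\zeta^{|\alpha|}\,\gamma^{\alpha_{111}+\alpha_{222}}\,a_1^{F_{1,1}}a_2^{F_{2,1}}\,b_1^{F_{1,2}}b_2^{F_{2,2}}\,c_1^{F_{1,3}}c_2^{F_{2,3}},
\]
since the exponent of $a_t$ is the number of factors $P_{tjk}$ in $f$ (counted with multiplicity), which is exactly $F_{t,1}$, and analogously the exponents of $b_t$ and $c_t$ equal $F_{t,2}$ and $F_{t,3}$. The same formula holds for $\phi(g)$ with $G$ and $\beta$. Matching parameter by parameter, the equality $\phi(f)=\phi(g)$ splits into three conditions: the $a,b,c$ exponents agree iff $F=G$; the $\zeta$-exponent condition $|\alpha|=|\beta|$ is automatic once $F=G$, since each column of $F$ sums to the total degree of $f$; and the $\gamma$-exponent condition reads $\alpha_{111}+\alpha_{222}=\beta_{111}+\beta_{222}$.

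The backward implication ``$f-g\in I\Rightarrow F-G=0$'' is then immediate. For the forward implication, combine $F=G$ with the hypothesis $\alpha_{111}=\beta_{222}$ and the coprimality of $f,g$: if $\alpha_{111}>0$ then $\beta_{111}=0$, and if $\beta_{222}>0$ then $\alpha_{222}=0$, which yields
\[
\alpha_{111}+\alpha_{222}\;=\;\alpha_{111}\;=\;\beta_{222}\;=\;\beta_{111}+\beta_{222},
\]
verifying the $\gamma$-exponent condition and hence $f-g\in I$. The only delicate point of the whole argument is this last bookkeeping around the $\gamma$-grading, and it is precisely what forces the extra hypothesis into the statement: the parameter $\gamma$ contributes an additional grading on the $P_{ijk}$ that the matrix $F$ does not record, so without the hypothesis $F=G$ would be necessary but not sufficient for membership in $I$.
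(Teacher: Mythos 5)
Your overall strategy is the same as the paper's: identify $I$ with the kernel of the monomial parameterization $\phi$, observe that for coprime monomials $f-g\in I$ is equivalent to $\phi(f)=\phi(g)$, and note that $F=G$ accounts exactly for the exponents of the $a_t,b_t,c_t$ (and automatically of $\zeta$), so that everything reduces to matching the exponent of $\gamma$. You execute this more carefully than the paper does, and in particular you correctly isolate the $\gamma$-grading as the one piece of data that the matrix $F$ does not record.

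However, the last step of your forward implication has a gap. The chain
\[
\alpha_{111}+\alpha_{222}=\alpha_{111}=\beta_{222}=\beta_{111}+\beta_{222}
\]
needs $\alpha_{222}=0$ and $\beta_{111}=0$, and your coprimality argument only delivers these when $\alpha_{111}=\beta_{222}>0$. In the degenerate case $\alpha_{111}=\beta_{222}=0$ nothing controls $\alpha_{222}$ and $\beta_{111}$, and the implication genuinely fails: take $f=P_{112}P_{222}$ and $g=P_{122}P_{212}$. These are coprime, $\deg_{P_{111}}f=0=\deg_{P_{222}}g$, and both associated matrices equal $\left(\begin{smallmatrix}1&1&0\\1&1&2\end{smallmatrix}\right)$, yet $\phi(f)=\zeta^{2}\gamma\,a_1a_2b_1b_2c_2^{2}\neq\zeta^{2}a_1a_2b_1b_2c_2^{2}=\phi(g)$, so $f-g\notin I$. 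This is really a defect of the statement rather than of your argument alone --- the paper's own proof silently assumes that $P_{111}$ divides $f$ and $P_{222}$ divides $g$ when it asserts that ``$\gamma$ will appear in both monomials'' --- and it disappears if one adds the symmetric hypothesis $\deg_{P_{222}}f=\deg_{P_{111}}g$, which gives $\alpha_{111}+\alpha_{222}=\beta_{111}+\beta_{222}$ directly without any coprimality argument. You should either add that hypothesis or treat the case $\alpha_{111}=\beta_{222}=0$ separately.
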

\begin{proof}
Suppose $F-G=0$. Hence $\forall t\in\{1,2\}, \forall s\in \{1,2,3\}  \mbox{ } F_{t,s}=G_{t,s}$, which means that the value 1 in the indexes, both in $f$ and $g$, appears the same number of times in the first, second and third position. The same happens for the value 2. By definition of $qI_2$, the number of times that the value 1 appears in first, second and third position determines respectively how many times the terms $a_1$, $b_1$ and $c_1$ appears (in the chosen monomial); similarly for the value 2 and $a_2$, $b_2$ and $c_2$. Since the two matrices are equal, the terms  $a_1,a_2,b_1,b_2,c_1,c_2$ appears in the same way in the monomials of $f-g$. Moreover since in the first monomial there is $P_{111}$ and in the second there is $P_{222}$, $\gamma$ will appear in both monomial of $f-g$. Then we conclude that $f=g$, i.e. $f-g \in I$. The opposite direction just follows by taking an irreducible binomial $f-g$ in $I$ and the matrices $F$ and $G$ and considering the constraints given by (\ref{qIeq}).
\end{proof}

\begin{Ex}\label{esempiomatrici2}\rm
Consider again the monomials of Example \ref{esempiomatrici1}. In this case we get $F=G$ and  $f-g \in I$. As a matter of fact, if we substitute the entries of (\ref{qIeq}) one has
$f=(\zeta\gamma a_1b_1c_1)(\zeta a_2b_1c_2){(\zeta a_2b_2c_1)}^2$ e $g=(\zeta a_1b_2c_1){(\zeta a_2b_1c_1)}^2(\zeta \gamma a_2b_2c_2)$ that is $f=g=\zeta^3 \gamma a_1a_2^3b_1^2b_2^2c_1^3c_2$.
\end{Ex}

In the case of  $mix_2$ we get the following invariants
$$P_{121}P_{212}-P_{112}P_{221},$$
$$P_{122}P_{211}-P_{121}P_{212},$$
$$P_{121}P_{211}^2-P_{111}P_{211}P_{221}-P_{212}P_{221}^2+P_{211}P_{221}P_{222},$$
$$P_{112}P_{211}^2-P_{111}P_{211}P_{212}-P_{212}^2P_{221}+P_{211}P_{212}P_{222},$$
$$P_{121}^2P_{211}-P_{111}P_{121}P_{221}-P_{122}P_{221}^2+P_{121}P_{221}P_{222},$$
$$P_{112}P_{121}P_{211}-P_{111}P_{112}P_{221}-P_{122}P_{212}P_{221}+P_{112}P_{221}P_{222},$$
$$P_{112}^2P_{211}-P_{111}P_{112}P_{212}-P_{122}P_{212}^2+P_{112}P_{212}P_{222},$$
$$P_{112}P_{121}^2-P_{111}P_{121}P_{122}-P_{122}^2P_{221}+P_{121}P_{122}P_{222},$$
$$P_{112}^2P_{121}-P_{111}P_{112}P_{122}-P_{122}^2P_{212}+P_{112}P_{122}P_{222},$$
$$P_{111}P_{112}P_{122}P_{212}+P_{122}^2P_{212}^2-P_{112}^3P_{221}-P_{112}P_{122}P_{212}P_{222}.$$
Also in this case we get 10 generators and the first two are the one of $Mix_2$, while the remaining 8 can be find again with the method of associated matrices, with some modifications. As a matter of fact, we notice that the generators are no more binomials, but tetranomials. Hence we have to choose four monomials
$$f=P_{111}^{\alpha_{111}}P_{112}^{\alpha_{112}}\cdots P_{222}^{\alpha_{222}}$$
$$g=P_{111}^{\beta_{111}}P_{112}^{\beta_{112}}\cdots P_{222}^{\beta_{222}}$$
$$h=P_{111}^{\gamma_{111}}P_{112}^{\gamma_{112}}\cdots P_{222}^{\gamma_{222}}$$
$$w=P_{111}^{\delta_{111}}P_{112}^{\delta_{112}}\cdots P_{222}^{\delta_{222}}$$
and we need to impose the following conditions
\begin{itemize}
\item $h,w$  do not contain  $P_{111}, P_{222}$, that is  $\gamma_{111}=\gamma_{222}=\delta_{111}=\delta_{222}=0$;
\item $f$  must contain $P_{111}$ and  $g$  must contain $ P_{222}$, that is $\alpha_{111}=\beta_{222}=1$;
\item $f,g$ must have the same terms, apart  $P_{111}$ and  $P_{222}$, hence  $\alpha_{222}=\beta_{111}=0$.
\end{itemize}
\begin{Prop}
Let $f,g,h,w$ be the monomials defined as above and let $F,G,H,W$ be their associated matrices. Suppose that $f$ and $h$ are coprime and $g$ and $w$ are coprime. If $H-F=W-G=0$ (or $H-G=W-F=0$) then $h-f+g-w \in I$ where $I \subset K[P_{111},P_{112},\dots ,P_{222}]$ is the ideal associated to $mix_2$.
\end{Prop}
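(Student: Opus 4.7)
The plan is to prove $h-f+g-w\in I$ by direct substitution of the $mix_2$ parameterization (\ref{mixeq}) into the tetranomial and verifying that it vanishes identically in the parameters. Writing $D=(1-\alpha)/2$, $T_1=a_1b_1c_1$ and $T_2=a_2b_2c_2$, the parameterization sends $P_{111}\mapsto \alpha T_1+D$, $P_{222}\mapsto \alpha T_2+D$, and every other $P_{ijk}\mapsto \alpha a_ib_jc_k$. Under the hypotheses that $f$ contains $P_{111}$ to the first power, $g$ contains $P_{222}$ to the first power, and $f,g$ agree in all other exponents, one may write $f=P_{111}R$ and $g=P_{222}R$ for a common monomial $R$ divisible by neither $P_{111}$ nor $P_{222}$. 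Letting $\bar R$ denote the monomial in the $a_i,b_j,c_k$ obtained by substituting $a_ib_jc_k$ for each $P_{ijk}$ in $R$, and $m$ the common degree of the four monomials, one obtains
\[
f \mapsto \alpha^{m-1}\bar R(\alpha T_1 + D), \qquad g \mapsto \alpha^{m-1}\bar R(\alpha T_2 + D),
\]
and the essential observation is that both images share the same additive summand $\alpha^{m-1}D\bar R$.

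The next step is to translate the matrix conditions into monomial identities. By construction $F_{t,s}$ counts the total number of times the index value $t$ appears in position $s$ across the variables of $f$; the factor $P_{111}$ contributes one to each of $F_{1,1},F_{1,2},F_{1,3}$ and the remainder comes from $R$. Equivalently, the exponents of the $a_i,b_j,c_k$ in the non-$D$ summand $\alpha^m T_1\bar R$ of the image of $f$ are prescribed exactly by $F$. Because $h$ contains neither $P_{111}$ nor $P_{222}$, its image under the substitution is purely of the form $\alpha^m\bar h$, where $\bar h$ is a monomial whose $(a,b,c)$-exponents are determined by $H$. Hence the equality $H=F$ forces $\bar h=T_1\bar R$, and analogously $W=G$ forces $\bar w=T_2\bar R$. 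Substituting into $h-f+g-w$ yields
\[
\alpha^m T_1\bar R - \bigl(\alpha^m T_1\bar R + \alpha^{m-1}D\bar R\bigr) + \bigl(\alpha^m T_2\bar R + \alpha^{m-1}D\bar R\bigr) - \alpha^m T_2\bar R = 0,
\]
so the tetranomial vanishes identically on the parameterization and belongs to $I$.

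The second case $H=G$, $W=F$ is handled by symmetric bookkeeping: it corresponds to swapping the roles of $h$ and $w$ relative to the first case, so $\bar h=T_2\bar R$ and $\bar w=T_1\bar R$, and the same chain of substitutions produces the analogous cancellation (after the corresponding relabeling). The main point of care in the whole argument is not any individual algebraic step but the dictionary between the matrix entry $F_{t,s}$ and the exponent of $a_t$, $b_t$, or $c_t$ in the substitution; once this correspondence is in place, the specific feature of the mixture model---that $P_{111}$ and $P_{222}$ carry identical additive corrections $D$---automatically produces the four-term cancellation with no further computation, and the coprimality hypotheses ensure that $h-f+g-w$ is genuinely a tetranomial rather than collapsing to a shorter expression.
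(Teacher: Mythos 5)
Your proof is correct and is essentially the paper's own argument made explicit: the paper's proof is a one-line reference to Proposition~\ref{propmatrici1} (``apply the procedure twice to pairs of monomials''), which amounts exactly to your pairing of $h$ with $f$ and of $w$ with $g$ via the dictionary between the matrix entries and the exponents of the $a_t,b_t,c_t$, followed by the cancellation of the two identical additive terms $\alpha^{m-1}D\bar R$ contributed by $P_{111}$ in $f$ and $P_{222}$ in $g$. The only caveat is that in the second case ($H=G$, $W=F$) the combination that literally vanishes under the substitution is the one with $h$ and $w$ interchanged, namely $w-f+g-h$, rather than $h-f+g-w$; this imprecision is already present in the statement itself, and your ``after the corresponding relabeling'' resolves it in the same spirit as the paper.
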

\begin{proof}
The proof  is similar to the  one of Proposition of \ref{propmatrici1}, just apply the procedure twice to pairs of monomial  and check for equivalences.
\end{proof}

A simple parameter count shows that the variety associated to the model $p$-$qI_2$ is a hypersurface in $\PP^7$. Actually it is described by the quartic binomial
\[
P_{1,1,1}P_{1,2,2}P_{2,1,2}P_{2,2,1}-P_{1,1,2}P_{1,2,1}P_{2,1,1}P_{2,2,2}.
\]
On the other hand the model $p$-$mix_2$ is saturated, i.e., the model ideal is the null ideal.

\begin{Rmk}
We saw above that the models $QI_2$ and $qI_2$ are easier than the corresponding models $Mix_2$ and $mix_2$. This remark is still valid for the models of pairwise agreement. Observe that, if we impose $\alpha^{(123)}=0$ in the Equation (\ref{pmixeq}) then the variety associated to this restricted model $p$-$mix_{2|\alpha^{(123)}=0}$  is a hypersurface described by the following sextic polynomial
\[
\begin{array}{c}
x_{1,1,1}^4x_{1,1,2}x_{1,2,1}-
2x_{1,1,1}^2x_{1,1,2}^3x_{1,2,1}+
x_{1,1,2}^5x_{1,2,1}-4x_{1,1,1}^2x_{1,1,2}^2x_{1,2,1}^2+\cdots\\
\\
\cdots + 1304 \mbox{ other terms}+\cdots\\
 \\
 \cdots +x_{1,1,1}x_{2,2,2}^5-x_{1,1,2}x_{2,2,2}^5-x_{1,2,1}x_{2,2,2}^5-x_{2,1,1}x_{2,2,2}^5.
\end{array}
\]

On the other side, every reduced model of the toric model obtained by keeping one or more parameters as fixed is still toric, and therefore described by binomials.

\end{Rmk}


\section{The case $n>2$}\label{sinv2}

Let us start with $QI_n$ and $Mix_n$.
Also in this case, all the invariants  omit diagonal terms.
Moreover there are new invariants which are still binomials, but with a squared term.
A way to construct the invariants for $QI_n$ and $Mix_n$ is the following one: given a degree $\delta$, we choose $d$ non-diagonal terms and we perform  $\delta-1$ changes of indices among the terms in such a way we never get diagonal terms. For example, for $n=3, \delta=2$ one has
$$P_{122}P_{131}-P_{121}P_{132}.$$
The last indices of the two monomials are interchanged. For $\delta=3$ we have, for example:
$$P_{211}P_{233}P_{313}-P_{213}^2P_{331}.$$
In this case we performed two successive changes, the first one on $P_{211}P_{313}$ changing the third index (and obtaining  $P_{213}P_{311}$) and the second one on $P_{233}P_{311}$ changing the second index and obtaining $P_{213}P_{331}$.
In general one has:

\begin{Prop}\label{QIMIXnhigher}
The models $QI_n$ and $Mix_n$ have the same invariants.
\end{Prop}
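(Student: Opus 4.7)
The plan is to prove that the projective varieties $V_{QI_n}$ and $V_{Mix_n}$ coincide in $\PP^N$; since both are cut out by radical ideals, this is equivalent to the equality of their vanishing ideals, and hence of their invariants.

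From the geometric descriptions established in Section \ref{sgeom}, we have $V_{Mix_n}=J(X_{n-1,n-1,n-1},\mathcal{D})$ and $V_{QI_n}=X_{n-1,n-1,n-1}\star\mathcal{H}$, where $\mathcal{D}$ is the $(n-1)$-dimensional linear space of diagonal tensors and $\mathcal{H}$ is the $n$-dimensional linear space of tensors whose off-diagonal entries share a common value. A parameter count gives the same expected dimension $4n-3$ for both varieties: on the join side $\dim X_{n-1,n-1,n-1}+\dim\mathcal{D}+1=(3n-3)+(n-1)+1$, and on the Hadamard side $\dim X_{n-1,n-1,n-1}+\dim\mathcal{H}=(3n-3)+n$. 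Since both varieties are irreducible (joins and Hadamard products of irreducibles are irreducible), this dimension match already suggests they coincide.

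I would then verify both inclusions by tracking generic parametric points. For $V_{QI_n}\subseteq V_{Mix_n}$: a generic element $P=S\star H$, with $S_{ijk}=a_ib_jc_k\in X_{n-1,n-1,n-1}$ and $H\in\mathcal{H}$ having common off-diagonal value $\eta$ and diagonal entries $\gamma_i$, has off-diagonal entries $\eta a_ib_jc_k$ and diagonal entries $\gamma_ia_ib_ic_i$. Writing $P=\eta S+R$, where $R$ is the diagonal tensor with entries $(\gamma_i-\eta)a_ib_ic_i$, realizes $P$ explicitly as a point of $J(X_{n-1,n-1,n-1},\mathcal{D})$. Conversely, a generic element $P=\lambda S+\mu D$ of the join, with $D\in\mathcal{D}$ having diagonal entries $d_i$, has off-diagonal part $\lambda a_ib_jc_k$ and diagonal part $\lambda a_ib_ic_i+\mu d_i$; on the dense open locus where $a_ib_ic_i\neq 0$ for every $i$, setting $\eta=\lambda$ and $\gamma_i=\lambda+\mu d_i/(a_ib_ic_i)$ produces an element $H\in\mathcal{H}$ with $P=S\star H$, placing $P$ in $X_{n-1,n-1,n-1}\star\mathcal{H}$.

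The main subtlety I expect is dealing cleanly with the boundary locus $\{a_ib_ic_i=0\}$, where the explicit Hadamard reconstruction breaks down. This is resolved by passing to Zariski closures: since both varieties are irreducible of the same dimension $4n-3$ and the reconstruction above identifies a common Zariski-dense open subset, their closures must coincide. The equality $V_{QI_n}=V_{Mix_n}$ then yields the proposition.
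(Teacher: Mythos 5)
Your proof is correct, but it takes a genuinely different route from the paper's. The paper argues directly at the level of the invariants: since each diagonal parameter $\gamma_i$ (in $QI_n$) and $d_i$ (in $Mix_n$) appears in exactly one coordinate $P_{iii}$ and is otherwise unconstrained, no cancellation involving a diagonal variable is possible, so diagonal variables cannot occur in any invariant; off the diagonal the two parameterizations agree up to a multiplicative constant, hence so do the invariants. You instead prove the equivalent statement that the two varieties coincide, by converting between the Hadamard description $X_{n-1,n-1,n-1}\star\mathcal{H}$ and the join description $J(X_{n-1,n-1,n-1},\mathcal{D})$: writing $S\star H=\eta S+R$ with $R$ diagonal gives one inclusion, and reconstructing $H$ from $\lambda S+\mu D$ on the dense open locus $a_ib_ic_i\neq 0$ gives the other after passing to Zariski closures via irreducibility. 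Your handling of the degenerate locus is the right way to close the argument, and you correctly treat the dimension count as only heuristic rather than load-bearing. Your version is more self-contained and rigorous (the paper's ``no cancellation'' claim is cleanest for binomial invariants of the toric model and is stated rather informally for the non-binomial invariants of $Mix_n$), whereas the paper's argument has the advantage of identifying what the common invariants actually are --- the Segre relations omitting diagonal terms --- which is used in the surrounding discussion.
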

\begin{proof}
The proof easily follows by looking at the structure of the models. As a matter of fact, the diagonal elements cannot appear in the invariants, since it would not be possible a cancellation of $\gamma_i$'s for the model  $QI_n$ or of $d_i$'s for $Mix_n$. Apart from the diagonal elements, the models are equal (with the exception of a multiplicative constant in $Mix_n$) and the same must be true for their invariants.
\end{proof}

We pass now to study the invariants for the models $qI_n$ and $mix_n$. Here the number of invariants is quite big if compared with the previous models. However, it is still possible to describe them in a combinatorial way. Let us start with some general facts.

Given a monomial  $f=P_{i_1j_1k_1}^{\alpha_{i_1j_1k_1}}\cdots P_{i_mj_mk_m}^{\alpha_{i_mj_mk_m}}$ we associate to it a $t \times 3$ matrix
\[
A_f=\begin{pmatrix}
a_{11} & a_{12} & a_{13}\\
a_{21} & a_{22} & a_{23}\\
\vdots & \vdots & \vdots\\
a_{t1} & a_{t2} & a_{t3}
\end{pmatrix}
\]
with $t=\sum_{s=1}^m \alpha_{i_sj_sk_s}$ and where every row is of the form $(i_s \, j_s \, k_s)$, repeated $\alpha_{i_sj_sk_s}$ times.

Denote by $\rho(A)$ the number of rows with equal entries, i.e.,
\[
\rho(A)=\sharp \{s: a_{s1}=a_{s2}=a_{s3} \}.
\]
Hence, $\rho(A)$ is the number of diagonal elements in the monomial counted with multiplicity.

Let $\sigma_1, \sigma_2$ and $\sigma_3$ permutations of $t$ elements, such that at least two of them are distinct, and define $\sigma=(\sigma_1,\sigma_2,\sigma_3)$. By conditions on $\sigma_1, \sigma_2$ and $\sigma_3$, it follows that $\sigma$ is an element in $(S_t)^3\setminus Diag$, where $S_t$ is the set of permutations of $t$ elements and
\[
Diag=\{(\sigma_1,\sigma_2,\sigma_3)\,:\, \sigma_1=\sigma_2=\sigma_3\}\subset (S_t)^3.
\]
Using  $\sigma_1, \sigma_2$ and $\sigma_3$ we can define a map
\[
F_\sigma: M_{t \times 3}  \to  M_{t \times 3}
\]
sending $A_f$ to
\[
F_\sigma(A_f)=\begin{pmatrix}
a_{\sigma_1(1)1} & a_{\sigma_2(1)2} & a_{\sigma_3(1)3}\\
a_{\sigma_1(2)1} & a_{\sigma_2(2)2} & a_{\sigma_3(2)3}\\
\vdots & \vdots & \vdots\\
a_{\sigma_1(t)1} & a_{\sigma_2(t)2} & a_{\sigma_3(t)3}
\end{pmatrix}.
\]

Let $R_{pq}$ be the $t \times t$ matrix whose entries are $r_{pq}=r_{qp}=1$,  and zero otherwise. The right multiplication of a matrix $M$, of order $t \times l$, by the matrix $R_{pq}$  gives a new $t \times l$ matrix equal to  $M$ except for the $p$-th and $q$-th rows  which are exchanged.

\begin{Def} Let $A$ be a $t \times 3-$matrix. We define two \emph{$S_t$-sets of $A$} as
{\small{\[
\Sigma(A)=\{ F_\sigma(A): \sigma \in (S_t)^3\setminus Diag  \mbox{ with }  \rho(F_\sigma(A)) = \rho(A) \mbox{ and  } \nexists R_{ts} \mbox{ with } R_{ts}\sigma(A)=A \}.
\]
\[
\Sigma'(A)=\{ F_\sigma(A): \sigma \in (S_t)^3\setminus Diag  \mbox{ with }  \rho(F_\sigma(A)) \leq \rho(A) \mbox{ and  } \nexists R_{ts} \mbox{ with } R_{ts}\sigma(A)=A \}.
\]}}
\end{Def}

 Starting from the matrix $A_f$ of a monomial $f$, we compute the sets $\Sigma(A_f)$ and  $\Sigma'(A_f)$, then to every matrix in $\Sigma(A_f)$ (and in $\Sigma'(A_f)$) we can associate, through inverse construction, a new monomial  $P_{i'_1j'_1k'_1}^{\alpha_{i'_1j'_1k'_1}}\cdots P_{i'_mj'_mk'_m}^{\alpha_{i'_mj'_mk'_m}}$.

\begin{Def} Given a monomial $f$ of the form $P_{i_1j_1k_1}^{\alpha_{i_1j_1k_1}}\cdots P_{i_mj_mk_m}^{\alpha_{i_mj_mk_m}}$, denote by $\overline{P_{i_1j_1k_1}^{\alpha_{i_1j_1k_1}}\cdots P_{i_mj_mk_m}^{\alpha_{i_mj_mk_m}}}$ (resp. by $\underline{P_{i_1j_1k_1}^{\alpha_{i_1j_1k_1}}\cdots P_{i_mj_mk_m}^{\alpha_{i_mj_mk_m}}}$) any monomial obtained from a matrix in  $\Sigma(A_f)$ (resp. in $\Sigma'(A_f)$).
\end{Def}

We denote by $S$  the subset of $\{1, 2, \dots, n\}\times\{1, 2, \dots, n\}\times\{1, 2, \dots, n\}$ consisting of all triplets $(i,j,k)$ with at least two distinct entries.

\begin{Thm}\label{thminv1}
The following polynomials are invariants for the model $qI_n$:
\begin{itemize}
\item [(i)] $P_{ijk}P_{i'j'k'}-\overline{P_{ijk}P_{i'j'k'}}=0$ where $(i,j,k), (i',j',k') \in S$ are distinct;
\item [(ii)] $P_{iii}P_{klm}P_{qrs}-\overline{P_{iii}P_{klm}P_{qrs}}=0$ where $(k,l,m),(q,r,s) \in  S$ are distinct and there exists $a\in \{1, \dots, n\}\setminus \{i\}$ such that
\[
a\in \{k,q\}\cap \{l,r\} \cap \{m,s\}
\]
 in such a way changing the indexes we get another diagonal element $P_{aaa}$ (different from $P_{iii}$);
\item [(iii)]$P_{ijk}P_{i'j'k'}P_{i''j''k''}-\overline{P_{ijk}P_{i'j'k'}P_{i''j''k''}}=0$ where $(i,j,k), (i',j',k'), (i'',j'',k'') \in S$ and at least two triplets are distinct;
\item [(iv)] $P_{iii}P_{klm}P_{qrs}P_{xyz}-\overline{P_{iii}P_{klm}P_{qrs}P_{xyz}}=0$ where $(k,l,m),(q,r,s),(x,y,z) \in S$, at least two triplets are distinct and there exists $a\in \{1, \dots, n\}\setminus \{i\}$ such that
\[
a\in \{k,q,x\}\cap \{l,r,y\} \cap \{m,s,z\}
\]
 in such a way changing the indexes we get another diagonal element $P_{aaa}$ (different from $P_{iii}$);
\item [(v)] $P_{iii}P_{klm}P_{qrs}P_{tuv}P_{xyz}-\overline{P_{iii}P_{klm}P_{qrs}P_{tuv}P_{xyz}}=0$  with $(k,l,m)$,$(q,r,s)$, $(t,u,v)$, $(x,y,z) \in S$,at least three triplets are distinct and there exists $a\in \{1, \dots, n\}\setminus \{i\}$ such that
\[
a\in \{k,q,t,x\}\cap \{l,r,u,y\} \cap \{m,s,v,z\}
\]
 in such a way changing the indexes we get another diagonal element $P_{aaa}$ (different from $P_{iii}$);
\end{itemize}
\end{Thm}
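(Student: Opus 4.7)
The plan is to substitute the parameterization~(\ref{qIeq}) into each listed monomial and verify that the two monomials of every binomial produce the same expression in the parameters $\zeta,\gamma,a_i,b_j,c_k$. Writing an arbitrary monomial as $f=\prod_{s=1}^m P_{i_sj_sk_s}^{\alpha_s}$ and setting $t=\sum_s\alpha_s$, the substitution yields
\[
f \longmapsto \zeta^{t}\,\gamma^{\rho(A_f)}\prod_{\ell=1}^n a_\ell^{N^{(1)}_\ell}\,b_\ell^{N^{(2)}_\ell}\,c_\ell^{N^{(3)}_\ell},
\]
where $N^{(s)}_\ell$ is the number of entries equal to $\ell$ in the $s$-th column of $A_f$ and $\rho(A_f)$ counts the diagonal factors of $f$ with multiplicity.

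The next step is to observe that any monomial $\overline{f}$ arising from a matrix in $\Sigma(A_f)$ has the same image. By construction, $F_\sigma$ acts by permuting each column of $A_f$ independently by $\sigma_s\in S_t$, so the total number $t$ of rows and the multiset of entries in each column are preserved; hence the exponents $t$ and $N^{(s)}_\ell$ coincide for $A_f$ and $F_\sigma(A_f)$. The defining condition of $\Sigma$ additionally forces $\rho(F_\sigma(A_f))=\rho(A_f)$, so the exponent of $\gamma$ matches as well. It follows that $f$ and $\overline{f}$ produce identical parametric expressions, so $f-\overline{f}$ vanishes on the model.

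It then remains to verify that every polynomial in (i)--(v) has this form. Cases (i) and (iii) involve only non-diagonal indices, so $\rho(A_f)=0$, and the notation $\overline{\,\cdot\,}$ is by definition restricted to those $F_\sigma$ that create no diagonal row. In cases (ii), (iv), (v) the monomial contains a single diagonal factor $P_{iii}$, giving $\rho(A_f)=1$; the hypothesis that some $a\in\{1,\dots,n\}\setminus\{i\}$ lies in the intersection of the three column sets of the remaining factors is exactly the condition that lets one choose $\sigma=(\sigma_1,\sigma_2,\sigma_3)$ moving the row $(i,i,i)$ into a row whose original entries were $(a,a,a)$ componentwise, so that $P_{iii}$ is swapped for $P_{aaa}$ and $\rho=1$ is maintained. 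The main obstacle is the combinatorial bookkeeping in these last three cases: one must check that the specific permutation realizing the swap $P_{iii}\leftrightarrow P_{aaa}$ does not inadvertently create or destroy additional diagonal rows among the other factors; the triple-intersection hypothesis is tailored precisely to make this work, so the verification reduces to exhibiting the required $\sigma$ explicitly from that hypothesis.
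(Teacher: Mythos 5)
Your proposal is correct and follows essentially the same route as the paper: substitute the parameterization (\ref{qIeq}) and observe that the resulting expression depends only on the column multisets of $A_f$ and on $\rho(A_f)$, both of which are preserved by the maps $F_\sigma$ defining $\Sigma(A_f)$. The paper only works out case (ii) explicitly and declares the rest ``similar,'' whereas you give the uniform formula covering all five cases at once, which is a cleaner write-up of the same idea.
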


\begin{proof}
Case i) are the invariants of the independence model that omit diagonal terms. We prove only the case ii), the others can be proved in a similar way. The monomial $P_{iii}P_{klm}P_{qrs}$ corresponds to $\zeta \gamma a_i b_i c_i a_k b_l c_m a_q b_r c_s$ while the monomial $\overline{P_{iii}P_{klm}P_{qrs}}$ corresponds to a permutation of the indices, according to the positions, giving again $\zeta \gamma a_i b_i c_i a_k b_l c_m a_q b_r c_s$.
\end{proof}

\begin{Thm}\label{thminv2}

The following polynomials are invariants for the model  $mix_n$
\begin{itemize}
\item[(i)] $P_{ijk}P_{i'j'k'}-\underline{P_{ijk}P_{i'j'k'}}=0$ where $(i,j,k), (i',j',k') \in S$ are distinct;
\item[(ii)] $P_{iii}P_{klm}-P_{jjj}P_{klm}+\underline{P_{jjj}P_{klm}}-\underline{P_{iii}P_{klm}}=0$ with $(k,l,m) \in S$ and at least two indices among $k,l$ and $m$ are different from $i$ e $j$;
\item[(iii)] $P_{ijk}P_{qrs}P_{xyz}-\underline{P_{ijk}P_{qrs}P_{xyz}}=0$ where $(i,j,k), (q,r,s), (x,y,z)  \in S$ and at least two triples are distinct;
\item[(iv)] $P_{iii}P_{klm}P_{k'l'm'}-P_{jjj}P_{klm}P_{k'l'm'}-\underline{P_{iii}P_{klm}P_{k'l'm'}}+\underline{P_{jjj}P_{klm}P_{k'l'm'}}=0$ where $(k,l,m),(k',l',m') \in S$ are distinct and at least two indices among $k,l, m$ and among $k',l',m'$ different from $i$ e $j$;
\item[(v)] $P_{iii}P_{jjj}P_{qrs}-P_{iii}P_{kkk}P_{qrs}+P_{jjj}P_{kkk}P_{qrs}-P_{jjj}^2P_{qrs}+P_{jjj}\underline{P_{jjj}P_{qrs}}-P_{iii}\underline{P_{jjj}P_{qrs}}+\underline{P_{iii}P_{kkk}P_{qrs}}-\underline{P_{jjj}P_{kkk}P_{qrs}}$ where $(q,r,s)\in S$, $i,j$ and $k$ are distinct and  $j\notin \{q,r,s\}$.
\item[(vi)] $P_{iii}P_{jjj}^2-P_{iii}^2P_{jjj}+P_{iii}^2P_{kkk}-P_{iii}P_{kkk}^2+P_{jjj}P_{kkk}^2-P_{jjj}^2P_{kkk}+P_{iii}\underline{P_{iii}P_{jjj}}-P_{jjj}\underline{P_{iii}P_{jjj}}+P_{kkk}\underline{P_{iii}P_{kkk}}-P_{iii}\underline{P_{iii}P_{kkk}}+P_{jjj}\underline{P_{jjj}P_{kkk}}-P_{kkk}\underline{P_{jjj}P_{kkk}}$ where $i,j$ and $k$ are distinct;
\item[(vii)] $P_{iii}P_{klm}P_{k'l'm'}P_{k''l''m''}-P_{jjj}P_{klm}P_{k'l'm'}P_{k''l''m''}-\underline{P_{iii}P_{klm}P_{k'l'm'}P_{k''l''m''}}+$ \\$+\underline{P_{jjj}P_{klm}P_{k'l'm'}P_{k''l''m''}}$ where $(k,l,m),(k',l',m'),(k'',l'',m'')\in S$, at least two triplets are distinct and
 at least two indices among $k,l, m$, among $k',l',m'$ and  among $k'',l'',m''$ different from $i$ e $j$;
\end{itemize}
\end{Thm}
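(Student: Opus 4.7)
The plan is to substitute the parameterization \eqref{mixeq} directly and verify each identity by expansion, separating the Segre piece $\alpha a_i b_j c_k$ from the diagonal correction $\varepsilon := (1-\alpha)/n$ (present only when $i=j=k$). The guiding principle, as in Theorem~\ref{thminv1}, is that a column-wise permutation $\sigma = (\sigma_1,\sigma_2,\sigma_3)$ leaves the associated Segre product $\prod_s a_{i_s}b_{j_s}c_{k_s}$ invariant: $\sigma$ merely relabels the rows within each column while preserving column multisets. Thus the Segre part of any $(f,\underline{f})$ pair automatically cancels, and the job reduces to tracking $\varepsilon$-corrections. Cases (i) and (iii) have $\rho(A_f)=0$, so the constraint $\rho(F_\sigma(A_f))\leq 0$ forces $\rho=0$ throughout and no $\varepsilon$-term is ever created; the identities are pure Segre cancellations.

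For the single-diagonal cases (ii), (iv) and (vii), the engine is the elementary identity
\begin{equation*}
P_{iii}-P_{jjj} = \alpha(a_i b_i c_i - a_j b_j c_j),
\end{equation*}
in which $\varepsilon$ drops out. In case (ii) one applies a single $\sigma$ simultaneously to the index-matrices of $P_{iii}P_{klm}$ and $P_{jjj}P_{klm}$, producing $\underline{P_{iii}P_{klm}}$ and $\underline{P_{jjj}P_{klm}}$; the hypothesis that at least two indices among $k,l,m$ differ from $i$ and $j$ is exactly what one needs to guarantee that a non-trivial such $\sigma$ (modulo row swaps) exists and does not turn any row of either permuted matrix into a diagonal element $P_{aaa}$ that would reintroduce an uncancelled $\varepsilon$. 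A direct expansion then gives
\begin{equation*}
P_{iii}P_{klm} - \underline{P_{iii}P_{klm}} = \alpha\varepsilon\, a_k b_l c_m,
\end{equation*}
together with the same formula for $j$ in place of $i$; subtracting produces the advertised zero. Cases (iv) and (vii) follow the identical template, with one or two additional off-diagonal factors $P_{k'l'm'}$, $P_{k''l''m''}$ riding along as Segre spectators that multiply both sides of the identity uniformly.

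Cases (v) and (vi) are the technically serious part, and I expect them to be the main obstacle. Three distinct diagonal indices $i,j,k$ now enter simultaneously, so the expansion contains contributions at every order $\varepsilon^r$ up to $\varepsilon^3$ in (vi). The polynomial is built as a telescoping alternating sum over ordered pairs from $\{i,j,k\}$ together with suitably chosen $\underline{\,\cdot\,}$-counter-terms from $\Sigma'$, arranged so that the leading $\alpha^d$ Segre terms cancel by permutation invariance, the $\alpha^{d-1}\varepsilon$ terms cancel in pairs via the $P_{iii}-P_{jjj}$ identity, and the higher-order $\varepsilon^r$ contributions cancel pairwise because the remaining Segre factors coincide with opposite signs. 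The real labor is the bookkeeping: every $\varepsilon^r$-monomial must be matched to a unique cancelling partner, and the side-hypotheses $j\notin\{q,r,s\}$ in (v) and the mutual distinctness of $i,j,k$ in (vi) are inserted precisely to rule out the degenerate coincidences---where a single $\sigma$ might yield the same underlined monomial for two different diagonal pairings---that would otherwise spoil the telescoping.
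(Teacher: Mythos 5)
Your proposal is correct and follows essentially the same route as the paper, which simply substitutes the parameterization~\eqref{mixeq} and observes (as in the proof of Theorem~\ref{thminv1}) that the column-wise permutations defining $\Sigma'$ preserve the Segre product, so that only the diagonal corrections $(1-\alpha)/n$ need to be tracked and cancelled. In fact your separation into the $\varepsilon$-orders and the telescoping over pairs from $\{i,j,k\}$ in cases (v)--(vi) supplies more detail than the paper's one-line proof, and a direct expansion confirms that the $\varepsilon^2$ and $\alpha\varepsilon$ contributions cancel exactly as you predict.
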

\begin{proof}
 The proof is similar to the one of Theorem \ref{thminv1}.
\end{proof}

For the pairwise models $p$-$qI_n$ and $p$-$mix_n$ the number of invariants grows so fast to do not allow a simple description, and this makes the geometric approach not manageable for practical applications.

To underline this fact, we give some computational results for $n=3$. The model $p$-$qI_3$ has $3,436$ invariants which are:
\begin{itemize}
\item[$\bullet$] 52 binomials of degree 3;
\item[$\bullet$] 774 binomials of degree 4;
\item[$\bullet$] 1062 binomials of degree 5;
\item[$\bullet$] 1503 binomials of degree 6;
\item[$\bullet$] 18 binomials of degree 7;
\item[$\bullet$] 27 binomials of degree 8.
\end{itemize}

Instead, the model $p$-$mix_3$ has 359 invariants which are:
\begin{itemize}
\item[$\bullet$] 60 polynomials of degree 2;
\item[$\bullet$]  274 polynomials of degree 3;
\item[$\bullet$]  25 polynomials of degree 4;
\end{itemize}

Even though $p$-$mix_3$ has less invariants then $p$-$qI_3$, we emphasize that the the model $p$-$qI_3$ is described by binomials, while the invariants of $p$-$mix_3$ are not binomials.

\begin{Rmk}
The fact that the models based on quasi-independence are toric has also a computational counterpart. While for mixture models the actual computation of a Gr\"obner basis of the relevant ideal must be performed through elimination, for toric models one can use more specialized algorithms, for instance those implemented in {\tt 4ti2}, see \cite{hemmecke|hemmecke|malkin:05}, leading to a notable reduction in execution time.
\end{Rmk}


\section{Inclusion relations} \label{inclusionsection}

As noticed in the previous sections, there are trivial inclusions between the statistical models defined in Section \ref{sectionmodels}, namely:
\[
qI_n \subset QI_n \qquad mix_n \subset Mix_n \qquad p\textrm{-}mix_n \subset Mix_n \, .
\]

Using the same arguments in \cite{bocci|carlini|rapallo:10} but for three raters, we study in detail the relationship between $QI_n$ and $Mix_n$.

\begin{Prop}\label{inclusione}
In the open simplex $\Delta_{n\times n \times n,>0}$ one has
\[
Mix_n \subset QI_n
\]
\end{Prop}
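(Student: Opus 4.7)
The plan is direct: given $P \in Mix_n \cap \Delta_{n\times n\times n,>0}$ with parameters $(\alpha,\mathbf{a},\mathbf{b},\mathbf{c},\mathbf{d})$ witnessing $P \in Mix_n$ via Definition \ref{Mixndef}, I would exhibit parameters $(\zeta,\boldsymbol{\gamma},\mathbf{a}',\mathbf{b}',\mathbf{c}')$ producing the same tensor under the $QI_n$ parametrization of Definition \ref{Qindef}.

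First I would dispose of the boundary case $\alpha = 1$: then $P$ lies in the complete independence model, which sits inside $QI_n$ via $\gamma_i = 1$ for all $i$ and $\zeta = 1$. Assume from now on $\alpha < 1$; note that $\alpha > 0$ is forced, because any off-diagonal entry $P_{ijk} = \alpha a_i b_j c_k$ must be strictly positive by openness.

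Second, I would reuse the independence vectors of the mixture and read $\boldsymbol{\gamma}$ off the diagonal equations. Setting $\mathbf{a}' = \mathbf{a}$, $\mathbf{b}' = \mathbf{b}$, $\mathbf{c}' = \mathbf{c}$ and $\zeta = \alpha$, the off-diagonal identities $P_{ijk} = \zeta a_i b_j c_k$ hold tautologically. For the diagonal, solving
$$\alpha \gamma_i a_i b_i c_i \;=\; \alpha a_i b_i c_i + (1-\alpha)\, d_i$$
gives
$$\gamma_i \;=\; 1 + \frac{(1-\alpha)\, d_i}{\alpha\, a_i b_i c_i},$$
which is $\geq 1 \geq 0$ once well-defined. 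Normalization $\sum P_{ijk} = 1$ is inherited from $P \in \Delta$, so $\zeta$ plays the role of the normalizing constant consistently.

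The main (indeed only) obstacle is the well-definedness of $\gamma_i$, which requires $a_i b_i c_i > 0$, and this is exactly where the openness hypothesis enters. For fixed $i$, I would pick any $k \neq i$ (possible since $n \geq 2$) and consider the off-diagonal triple $(i,i,k)$: strict positivity of $P_{iik} = \alpha a_i b_i c_k$ forces $a_i, b_i > 0$ and $c_k > 0$, and varying $i$ and $k$ yields strict positivity of every component of $\mathbf{a}, \mathbf{b}, \mathbf{c}$. This is also the reason why the statement is restricted to $\Delta_{n\times n\times n,>0}$: on the boundary, a vanishing factor $a_i b_i c_i$ together with $d_i > 0$ would leave no finite choice of $\gamma_i$ able to reproduce the positive diagonal contribution, so the inclusion genuinely fails outside the open simplex.
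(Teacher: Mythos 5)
Your proposal is correct and follows essentially the same route as the paper: reuse the independence vectors of the mixture and solve the diagonal equation to get $\gamma_i = 1 + \frac{(1-\alpha)d_i}{\alpha\, a_i b_i c_i}$, which is exactly the paper's formula (the paper absorbs $\alpha$ into $\mathbf{a}$ rather than into $\zeta$, an immaterial difference). Your treatment is in fact slightly more careful than the paper's, since you explicitly dispose of the cases $\alpha\in\{0,1\}$ and derive the strict positivity of $a_i b_i c_i$ from the off-diagonal entries, where the paper simply asserts that all parameters are positive on the open simplex.
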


\begin{proof}
Notice, first of all, that  in $\Delta_{n\times n \times n,>0}$ all parameters in $QI_n$ and $Mix_n$ are strictly positive.
Denote by ${\mathbf a}$, ${\mathbf b}$, ${\mathbf c}$ the parameters in $QI_n$ and by $\overline{{\mathbf a}}$, $\overline{{\mathbf b}}$,$\overline{{\mathbf c}}$ the parameters in $Mix_n$.
Given a generic element in $Mix_n$, this can be written in $QI_n$ by taking
\[
{\mathbf a}=\alpha\overline{{\mathbf a}}, {\mathbf b}=\overline{{\mathbf b}}, {\mathbf c}=\overline{{\mathbf c}},
\]
and
\[
\gamma_i=1+\frac{(1-\alpha)d_i}{\alpha \overline{a}_i\overline{b}_i\overline{c}_i}.
\]
and this concludes the proofs.
\end{proof}

The remaining part of this section is devoted to show that in the boundary of $\Delta_{n\times n \times n}$ no inclusion is verified.

Let us find now a tensor $P\in Mix_n$ such that $P\not\in QI_n$. By choosing $\alpha=0$ in the model $Mix_n$ we have only diagonal tensors (the $D$ ones). Hence if $P\in Mix_n$ one has
$$
\begin{cases}
P_{ijk} = 0  \mbox{ for }  i,j,k  \mbox{ not all equal}\\
P_{iii} = d_i
\end{cases}
$$
Such tensor cannot belong to the model $QI_n$.As a matter of fact, considering, for example,  the entry $P_{112}$ this is zero since it is not in a diagonal position. Recalling the definition of $QI_n$, one has
$$ P_{112}=\zeta a_1b_1c_2.$$
If we take the diagonal entry $P_{111}$, we know that $P_{111}=d_1\neq0$, If we interpret it in $QI_n$ one has
$$P_{111}=\zeta \gamma_1a_1b_1c_1.$$
Since this entry is non-zero, we need to have
$$a_1,b_1,c_1 \neq 0$$
then, from the analysis of $P_{112}$ we get $c_2= 0$.
However, this is not possible since, if we take another diagonal entry, for example $P_{222}$, one has $P_{222}=d_2\neq0$ which, when seen in  $QI_n$ gives
$$P_{222}=\zeta\gamma_2a_2b_2c_2$$
against the fact that $c_2=0$.

However, by virtue of the inclusion in Proposition \ref{inclusione}, there are points belonging to $QI_n\cap  \Delta_{n\times n \times n,>0}$, arbitrarily close to $P$.

Now,  we look for a tensor $P\in QI_n$ such that $P \not\in Mix_n$.
By taking, for example, $\gamma_1 = 0$ we get, in $QI_n$, a tensor $P$ with  $P_{111}=0$ and moreover we can choose all the other parameters in such a way all other entries of $P$ are strictly positive.  Such kind of tensor cannot belong to $Mix_n$.
As a matter of fact, in $QI_n$ one has:
$$
\begin{cases}
P_{ijk} = \zeta a_ib_jc_k  \mbox{ for }  i,j,k  \mbox{ not all equal}\\
P_{ijk}=\zeta \gamma_ia_ib_jc_k \mbox{ for } i=j=k\\
P_{111}=0
\end{cases}
$$
Consider the diagonal element  $P_{111}$. Seeing it in  $Mix_n$ one has
$$P_{111}=\alpha a_1b_1c_1+(1-\alpha)d_1.$$
This must be zero, then
$$\alpha a_1b_1c_1 + (1-\alpha)d_1 = 0.$$
However, this equality forces to have  $P_{ijk}=0$, for some $i,j$ and $k$ with at least one index different from 1, and this contradicts the hypotheses on $P$.

\begin{Rmk}
Notice that model $p$-$qI_n$ does not contain the tensor corresponding to perfect agreement, i.e., the tensor with $1/n$ on the main diagonal and zero otherwise. Nevertheless, this point is on the boundary and can be described as the limit of $P$ in Equation (\ref{pqieq}) when $\gamma^{(12)}=\gamma^{(13)}=\gamma^{(23)} \rightarrow +\infty$.
\end{Rmk}

\section{$p$-$qI_n$, $p$-$mix_n$ and the pairwise Cohen's $\kappa$ coefficients} \label{pairwisekappa}

As mentioned in the Introduction, indices like Cohen's $\kappa$ have been defined to measure the agreement between raters. When there are two raters, the relevant probability distribution is a square matrix $P=(P_{ij}) \in \Delta_{n \times n}$. In such a case, the notion of agreement is clear. The extent of agreement can be read off directly from the table by considering the contributions of the diagonal elements. Large probabilities on the diagonal entries correspond to a strong agreement. The Cohen's $\kappa$ in the two-way setting measures the chance-corrected agreement, i.e., the excess of agreement with respect to a random selection of the categories. In formulae, we have:
\begin{equation} \label{kappa1}
\kappa = \frac {\sum_{i=1}^n P_{ii}- \sum_{i=1}^n P_{i+}P_{+j}} {1-\sum_{i=1}^n P_{i+}P_{+i}}  \, ,
\end{equation}
where $P_{i+}$ is the sum of the $i$-th row and $P_{+j}$ is the sum of the $j$-th column.

In the case of three raters, one can define 3 indices, the pairwise $\kappa$ coefficients. Given a tensor $P = (P_{ijk}) \in \Delta_{n \times n \times n}$, for the first and the second rater one first defines the marginal two-way table $\tilde P^{(12)}$ with entries
\[
\tilde P_{ij} = \sum_{k=1}^n P_{ijk}
\]
and then computes the $\kappa$ coefficient in Equation (\ref{kappa1}) in this table:
\begin{equation}\label{kappa}
\kappa_{12} = \frac {\sum_{i=1}^n \tilde P_{ii}- \sum_{i=1}^n \tilde P_{i+} \tilde P_{+j}} {1-\sum_{i=1}^n \tilde P_{i+} \tilde P_{+i}}  \, .
\end{equation}
The other indices $\kappa_{13}$ and $\kappa_{23}$ are defined in the same way on the other two two-way marginals.

In order to describe the correlations among the pairwise $\kappa$ indices, we consider the pairwise agreement models $p$-$qI_n$ and $p$-$mix_n$ and perform a geometric exploration of the maps
\begin{equation}\label{mappqi}
(\gamma^{(12)},\gamma^{(13)},\gamma^{(23)}) \longmapsto (\kappa_{12},\kappa_{13},\kappa_{23})
\end{equation}
in $p$-$qI_n$ and
\begin{equation} \label{mappmix}
(\alpha^{(0)},\alpha^{(12)},\alpha^{(13)},\alpha^{(23)},\alpha^{(123)}) \longmapsto (\kappa_{12},\kappa_{13},\kappa_{23})
\end{equation}
in $p$-$mix_n$.

First, we consider the special case of equal uniform marginals, i.e. ${\bf a} = {\bf b} = {\bf c} = (\frac 1 n , \ldots, \frac 1 n)$.

\begin{Prop}
If ${\mathbf a}={\mathbf b}={\mathbf c}=\left( \frac1n, \ldots ,\frac1n \right)$ then
\begin{itemize}
\item[i)] for the model $p$-$qI_n$
\[
\kappa_{12}=\frac{\gamma^{(12)}\gamma^{(13)}\gamma^{(23)}+(n-1)\gamma^{(12)}-\gamma^{(13)}-\gamma^{(23)}-n+2}{\gamma^{(12)}\gamma^{(13)}\gamma^{(23)}+(n-1)(\gamma^{(12)}+\gamma^{(13)}+\gamma^{(23)})+(n-1)(n-2)}
\]
and similar formulas hold for $\kappa_{13}$ and $\kappa_{23}$.
\item[ii)] for the model $p$-$mix_n$
\[
\kappa_{12}=\alpha^{(12)}+\alpha^{(123)}
\]
and similar formulas hold for $\kappa_{13}$ and $\kappa_{23}$.
\end{itemize}
\end{Prop}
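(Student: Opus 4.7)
The proof is essentially a direct calculation; I would organize it around the fact that under uniform marginals, the off-diagonal structure of the models collapses into just a few distinct cases.

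First I would compute the normalizing constant in the $p$-$qI_n$ model. With $a_i = b_j = c_k = 1/n$, every monomial $a_i b_j c_k$ equals $1/n^3$, so the only thing that distinguishes the probabilities $P_{ijk}$ is the coincidence pattern of the indices. Counting: there are $n(n-1)(n-2)$ triples with pairwise distinct entries, $n(n-1)$ triples of each of the three types with exactly one pair of equal entries, and $n$ triples on the diagonal. Summing gives
\[
\zeta = \frac{n^2}{\gamma^{(12)}\gamma^{(13)}\gamma^{(23)}+(n-1)(\gamma^{(12)}+\gamma^{(13)}+\gamma^{(23)})+(n-1)(n-2)},
\]
whose denominator I shall call $\Gamma$ (the denominator in the claim).

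Next I would compute the marginalized $n \times n$ table $\tilde P^{(12)}$ with entries $\tilde P_{ij}=\sum_k P_{ijk}$. For $i=j$, the sum over $k$ breaks into the case $k=i$ (contributing the triple diagonal factor) and the $n-1$ cases $k\neq i$ (each contributing $\gamma^{(12)}$); for $i\neq j$, it breaks into $k=i$ (factor $\gamma^{(13)}$), $k=j$ (factor $\gamma^{(23)}$), and $n-2$ cases $k\notin\{i,j\}$ (factor $1$). A short calculation shows that by symmetry $\tilde P_{i+} = \tilde P_{+i} = 1/n$ for every $i$, so the chance-agreement term in \eqref{kappa} is exactly $\sum_i \tilde P_{i+}\tilde P_{+i} = 1/n$. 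This is the key simplification.

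Substituting into \eqref{kappa} gives
\[
\kappa_{12} \;=\; \frac{n\sum_i \tilde P_{ii} - 1}{n-1}
\;=\; \frac{n[\gamma^{(12)}\gamma^{(13)}\gamma^{(23)}+(n-1)\gamma^{(12)}]/\Gamma - 1}{(n-1)/1},
\]
and after clearing $\Gamma$ from both numerator and denominator a factor of $(n-1)$ cancels, leaving precisely the stated formula. The formulas for $\kappa_{13}$ and $\kappa_{23}$ follow by permuting the roles of the three raters.

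For part (ii) the argument is even cleaner. Marginalizing over $k$ in \eqref{pmixeq} with uniform ${\mathbf a},{\mathbf b},{\mathbf c}$ gives $\tilde P_{ij} = \alpha^{(0)}/n^2 + \alpha^{(12)}\delta_{ij}/n + \alpha^{(13)}/n^2 + \alpha^{(23)}/n^2 + \alpha^{(123)}\delta_{ij}/n$, whose row and column sums are again all equal to $1/n$ (thanks to $\sum \alpha = 1$). Then
\[
\sum_i \tilde P_{ii} - \tfrac{1}{n} \;=\; \tfrac{\alpha^{(0)}+\alpha^{(13)}+\alpha^{(23)}}{n} + \alpha^{(12)} + \alpha^{(123)} - \tfrac{1}{n},
\]
and using $\alpha^{(0)}+\alpha^{(13)}+\alpha^{(23)} = 1 - \alpha^{(12)} - \alpha^{(123)}$ this collapses to $(\alpha^{(12)}+\alpha^{(123)})(n-1)/n$, which divided by $1-1/n=(n-1)/n$ gives $\kappa_{12} = \alpha^{(12)} + \alpha^{(123)}$.

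The only delicate step is the bookkeeping in part (i) -- carefully identifying which monomials collect $\gamma^{(12)}$, which collect $\gamma^{(13)}\gamma^{(23)}$, and so on when summing over $k$ on the diagonal $i=j$. Once the marginals $\tilde P_{ii}$ are correct and one notices that uniformity forces $\tilde P_{i+}=1/n$, the rest is straightforward algebra.
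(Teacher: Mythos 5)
Your proposal is correct and follows essentially the same route as the paper's proof: count the entry types to get the normalizing constant $\zeta=n^2/\Gamma$, compute the two-way marginal $\tilde P^{(12)}$, observe that uniformity forces $\tilde P_{i+}=\tilde P_{+i}=1/n$, and substitute into the $\kappa$ formula (and likewise for part (ii) using $\sum\alpha=1$). The only difference is cosmetic bookkeeping; in fact your expression for $\tilde P_{ij}$, $i\neq j$, correctly shows the factors $\gamma^{(13)}$ and $\gamma^{(23)}$ where the paper's displayed formula has a small typo repeating $\gamma^{(13)}$.
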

\begin{proof}

Since ${\mathbf a}={\mathbf b}={\mathbf c}=\left( \frac1n, \ldots \frac1n \right)$, the tensor $P\in p$-$qI_n$ has
\begin{itemize}
\item $n$ entries equal to $ \frac{\zeta}{n^3}\gamma^{(12)}\gamma^{(13)}\gamma^{(23)}$;
\item $n(n-1)$ entries equal to $\frac{\zeta}{n^3}\gamma^{(12)}$;
\item $n(n-1)$ entries equal to $\frac{\zeta}{n^3}\gamma^{(13)}$;
\item $n(n-1)$ entries equal to $\frac{\zeta}{n^3}\gamma^{(23)}$;
\item and $n^3-3n^2+2n$ entries equal to $\frac{\zeta}{n^3}$.
\end{itemize}
Since the sum of all entries of $P$ is equal to 1 we get:
\begin{equation}\label{somma}
1=\zeta\left[\frac{1}{n^2}\gamma^{(12)}\gamma^{(13)}\gamma^{(23)}+\frac{n-1}{n^2}\left(\gamma^{(12)}+\gamma^{(13)}+\gamma^{(23)}\right)+\frac{(n-1)(n-2)}{n^2}\right]
\end{equation}
from which we recover that
\[
\zeta=\frac{n^2}{T(\gamma^{(12)},\gamma^{(13)},\gamma^{(23)})}
\]
where $T(\gamma^{(12)},\gamma^{(13)},\gamma^{(23)})$ is the polynomial in the $\gamma$'s
\[
T(\gamma^{(12)},\gamma^{(13)},\gamma^{(23)})=\gamma^{(12)}\gamma^{(13)}\gamma^{(23)}+(n-1)(\gamma^{(12)}+\gamma^{(13)}+\gamma^{(23)})+(n-1)(n-2).
\]

Let us analyze now the terms $\tilde P_{ij}$ in the marginal two-way table $\tilde P^{(12)}$. If $i=j$, then $\tilde P_{ij}=\sum_{k=1}^n P_{ijk}$ consists of $n-1$ terms where the first and second indices are equal and one term where all indices are equal, hence one has
\begin{equation}\label{Pii}
\tilde P_{ii}=\zeta\left[\frac{(n-1)}{n^3}\gamma^{(12)}+\frac{1}{n^3}\gamma^{(12)}\gamma^{(13)}\gamma^{(23)}\right].
\end{equation}
If $i\not=j$, then $\tilde P_{ij}=\sum_{k=1}^n P_{ijk}$ consists of $n-2$ terms where the three indices are distinct, one term where $i=k$ and one term where $j=k$, hence one has
\begin{equation}\label{Pij}
\tilde P_{ij}=\zeta\left[\frac{(n-2)}{n^3}+\frac{1}{n^3}\gamma^{(13)}+\frac{1}{n^3}\gamma^{(13)}\right].
\end{equation}
From (\ref{Pii}) we get
\begin{equation}\label{diag}
\begin{split}
\sum_{i=1}^n \tilde P_{ii}&=n\zeta \left[\frac{(n-1)}{n^3}\gamma^{(12)}+\frac{1}{n^3}\gamma^{(12)}\gamma^{(13)}\gamma^{(23)}\right]=\\
&=\frac{\zeta}{n^2}\left((n-1)\gamma^{(12)}+\gamma^{(12)}\gamma^{(13)}\gamma^{(23)}\right).
\end{split}
\end{equation}
Notice that $\tilde P_{i+}$ consists of the sum of one term with equal indices, $\tilde P_{ii}$, and $n-1$ terms with distinct indices $\tilde P_{ij}$ and the same happens for $\tilde P_{+i}$, hence one has
\begin{equation*}
\begin{split}
\tilde P_{+i}=\tilde P_{i+}&=\zeta\left[\frac{(n-1)}{n^3}\gamma^{(12)}+\frac{1}{n^3}\gamma^{(12)}\gamma^{(13)}\gamma^{(23)}\right]+\\
&+\zeta\left[(n-1)\left(\frac{(n-2)}{n^3}+\frac{1}{n^3}\gamma^{(13)}+\frac{1}{n^3}\gamma^{(13)} \right)\right]\\
&=\frac{\zeta}{n^3}\cdot\left(\gamma^{(12)}\gamma^{(13)}\gamma^{(23)}+(n-1)(\gamma^{(12)}+\gamma^{(13)}+\gamma^{(23)}) +(n-1)(n-2)\right)\\
&=\frac{\zeta}{n^3}T(\gamma^{(12)},\gamma^{(13)},\gamma^{(23)})=\frac{1}{n}.
 \end{split}
\end{equation*}
Hence
\begin{equation}\label{rc}
\sum_{i=1}^n \tilde P_{i+} \tilde P_{+j}=\frac{1}{n}.
\end{equation}
Substituting (\ref{somma}), (\ref{diag}) and (\ref{rc})  in (\ref{kappa}) we get
\begin{equation*}
\begin{split}
\kappa_{12}& = \frac {\sum_{i=1}^n \tilde P_{ii}- \sum_{i=1}^n \tilde P_{i+} \tilde P_{+j}} {1-\sum_{i=1}^n \tilde P_{i+} \tilde P_{+i}}=\frac{\frac{\zeta}{n^2}\left((n-1)\gamma^{(12)}+\gamma^{(12)}\gamma^{(13)}\gamma^{(23)}\right)-\frac{1}{n}}{1-\frac{1}{n}}=\\
&=\left(\frac{1}{T(\gamma^{(12)},\gamma^{(13)},\gamma^{(23)})}\left((n-1)\gamma^{(12)}+\gamma^{(12)}\gamma^{(13)}\gamma^{(23)}\right)-\frac{1}{n} \right)\cdot \frac{n}{n-1}=\\
&=\frac{n\left[(n-1)\gamma^{(12)}+\gamma^{(12)}\gamma^{(13)}\gamma^{(23)}\right]-T(\gamma^{(12)},\gamma^{(13)},\gamma^{(23)})}{(n-1)\cdot T(\gamma^{(12)},\gamma^{(13)},\gamma^{(23)})}=\\
&=\frac{\gamma^{(12)}\gamma^{(13)}\gamma^{(23)}+(n-1)\gamma^{(12)}-\gamma^{(13)}-\gamma^{(23)}-n+2}{\gamma^{(12)}\gamma^{(13)}\gamma^{(23)}+(n-1)(\gamma^{(12)}+\gamma^{(13)}+\gamma^{(23)})+(n-1)(n-2)}.
\end{split}
\end{equation*}

Let us turn to case  $ii)$. Again,
since ${\mathbf a}={\mathbf b}={\mathbf c}=\left( \frac1n,\ldots \frac1n \right)$, the tensor $P\in p$-$mix_n$ has
\begin{itemize}
\item $n$ entries equal to $\frac{\alpha^{(0)}}{n^3}+\frac{\alpha^{(12)}}{n^2}+\frac{\alpha^{(13)}}{n^2}+\frac{\alpha^{(23)}}{n^2}+\frac{\alpha^{(123)}}{n}$
\item $n(n-1)$ entries equal to $\frac{\alpha^{(0)}}{n^3}+\frac{\alpha^{(12)}}{n^2}$;
\item $n(n-1)$ entries equal to $\frac{\alpha^{(0)}}{n^3}+\frac{\alpha^{(13)}}{n^2}$;
\item $n(n-1)$ entries equal to $\frac{\alpha^{(0)}}{n^3}+\frac{\alpha^{(23)}}{n^2}$;
\item and $n^3-3n^2+2n$ entries equal to $\frac{\alpha^{(0)}}{n^3}$.
\end{itemize}
Reasoning as in case $i)$ one has
\[
\begin{array}{l}
\sum_{i=1}^n \tilde P_{ii}=\frac{\alpha^{(0)}+n\alpha^{(12)}+\alpha^{(13)}+\alpha^{(23)}+n\alpha^{(123)}}{n}=\frac{1+(n-1)\alpha^{(12)}+(n-1)\alpha^{(123)}}{n}\\
\\
\tilde P_{+i}=\tilde P_{i+}=\frac{1}{n}\\
\\
\sum_{i=1}^n \tilde P_{i+} \tilde P_{+j}=n\frac{1}{n^2}=\frac{1}{n}
\end{array}
\]
Hence
\begin{equation*}
\begin{split}
\kappa_{12}& = \frac {\sum_{i=1}^n \tilde P_{ii}- \sum_{i=1}^n \tilde P_{i+} \tilde P_{+j}} {1-\sum_{i=1}^n \tilde P_{i+} \tilde P_{+i}} =\\
&=\frac{\frac{1+(n-1)\alpha^{(12)}+(n-1)\alpha^{(123)}}{n}-\frac{1}{n}}{1-\frac{1}{n}}=\frac{\frac{(n-1)(\alpha^{(12)}+\alpha^{(123)})}{n}}{\frac{n-1}{n}}=\\
&=\alpha^{(12)}+\alpha^{(123)}.
\end{split}
\end{equation*}
\end{proof}

In the general case, we use the software Maple${}^\mathrm{TM}$ \cite{maple} to study the image of the maps in Equations (\ref{mappqi}) and (\ref{mappmix}) for fixed values of $\bf a, \bf b, \bf c$. Notice that when the $\gamma$ parameters are greater than or equal to $0$ in the $p$-$qI_n$ model, then the pairwise $\kappa$ indices are in $[0,1]$ and the same happens for arbitrary values of $\alpha^{(0)},\alpha^{(12)},\alpha^{(13)},\alpha^{(23)},\alpha^{(123)}$ with sum one in the $p$-$mix_n$ model. Therefore, we consider the images of the maps in the cube $[0,1]^3$. The Maple${}^\mathrm{TM}$ code to obtain the figures presented here is reported in the Appendix. All figures are based on the grids $\{10^0, 10^{0.1}, 10^{0.2}, \ldots, 10^{2}\}^3$ for the $\gamma$'s and $\{0, 0.1, \ldots, 1\}^3$ for the $\alpha$'s.

\begin{figure}
\begin{tabular}{cc}
\includegraphics[width=6cm]{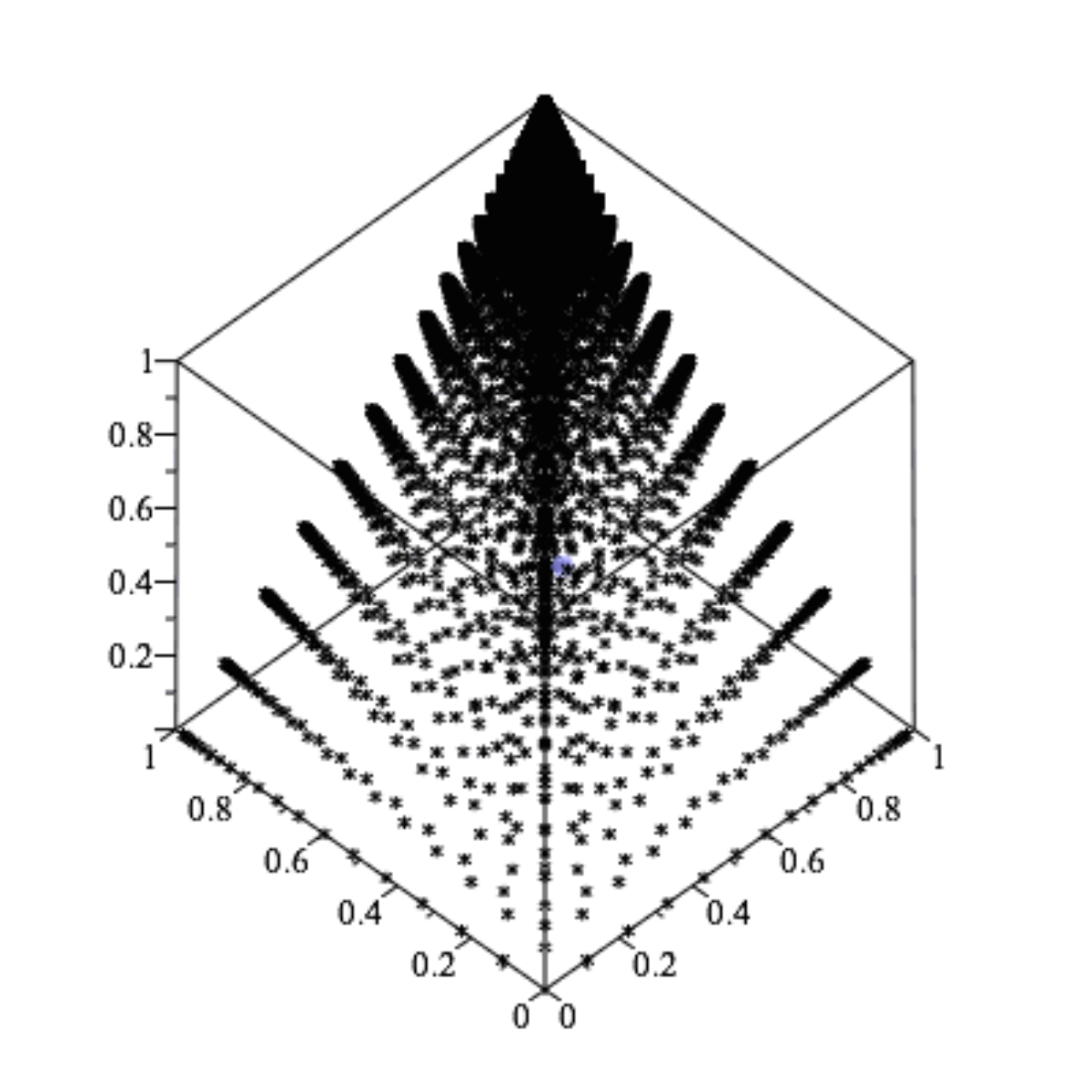} &\includegraphics[width=6cm]{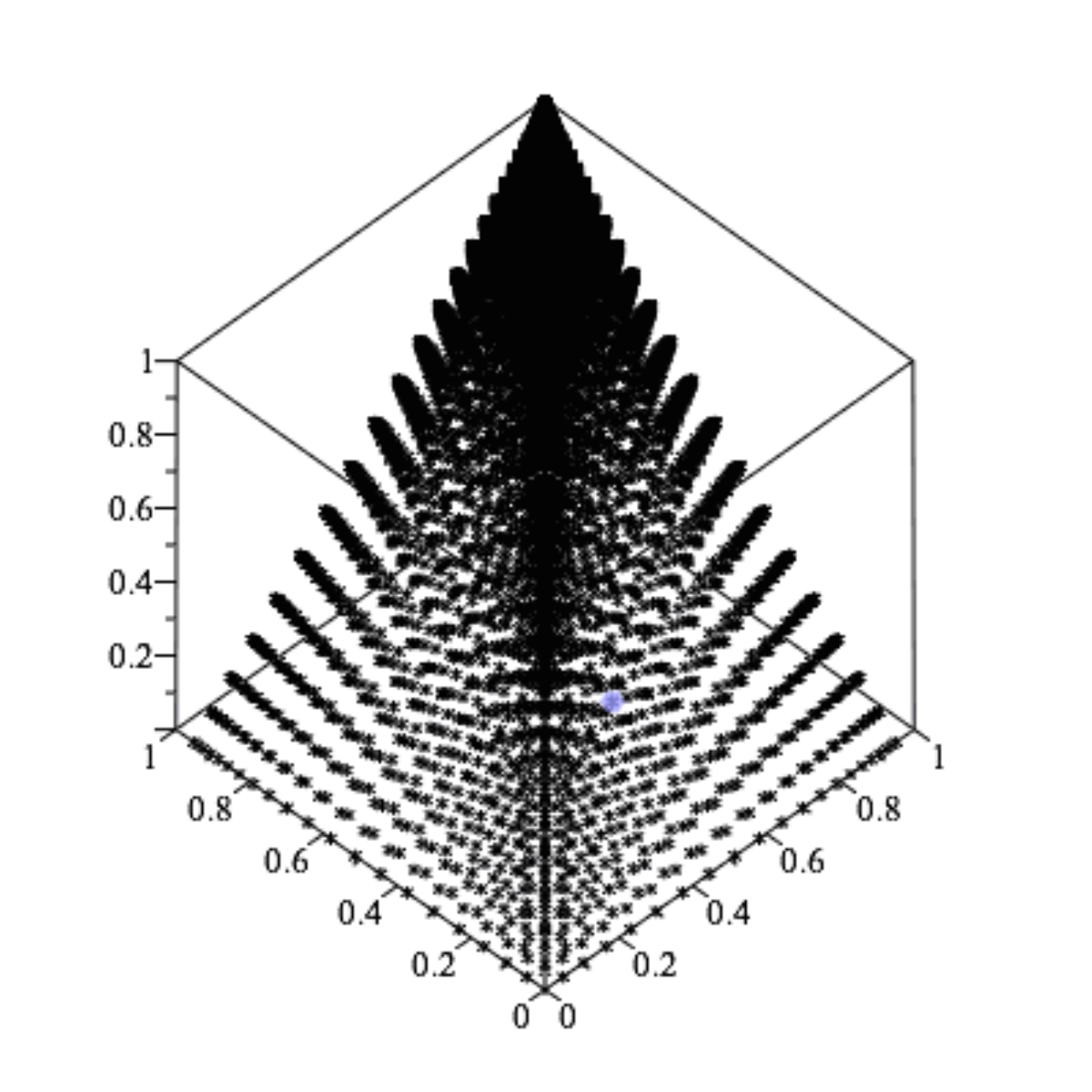}
\\
${\mathbf a}={\mathbf b}={\mathbf c}=\left( \frac12, \frac12\right)$&  ${\mathbf a}={\mathbf b}={\mathbf c}=\left( \frac15, \frac15, \frac15, \frac15, \frac15\right)$\\
\\
$\kappa_{12}=\frac{\gamma^{(12)}\gamma^{(13)}\gamma^{(23)}+\gamma^{(12)}-\gamma^{(13)}-\gamma^{(23)}}{\gamma^{(12)}\gamma^{(13)}\gamma^{(23)}+\gamma^{(12)}+\gamma^{(13)}+\gamma^{(23)}}$ &
$\kappa_{12}=\frac{\gamma^{(12)}\gamma^{(13)}\gamma^{(23)}+4\gamma^{(12)}-\gamma^{(13)}-\gamma^{(23)}-3}{\gamma^{(12)}\gamma^{(13)}\gamma^{(23)}+4\gamma^{(12)}+4\gamma^{(13)}+4\gamma^{(23)}+12}$
\end{tabular}
\caption{$\kappa$ indices for the model $p$-$qI_n$ with $n=2$ (left) and $n=5$ (right).}\label{esempio1pqi}
\end{figure}

In Fig.~\ref{esempio1pqi} the $p$-$qI_n$ models are compared in the case of equal uniform marginals. From the plots, we observe that the pairwise $\kappa$'s increase faster in the case $n=2$ than in the case $n=5$, and there is strong correlation between the pairwise $\kappa$'s when they become near to $1$. In Fig.~\ref{esempio1pmix} we present plots for the same scenario, but for the $p$-$mix_n$ model. As it is immediate from the formulae, we obtain the same plot, meaning that function mapping the $\alpha$'s into the pairwise $\kappa$'s is independent on the number of categories.

\begin{figure}
\begin{tabular}{cc}
\includegraphics[width=6cm]{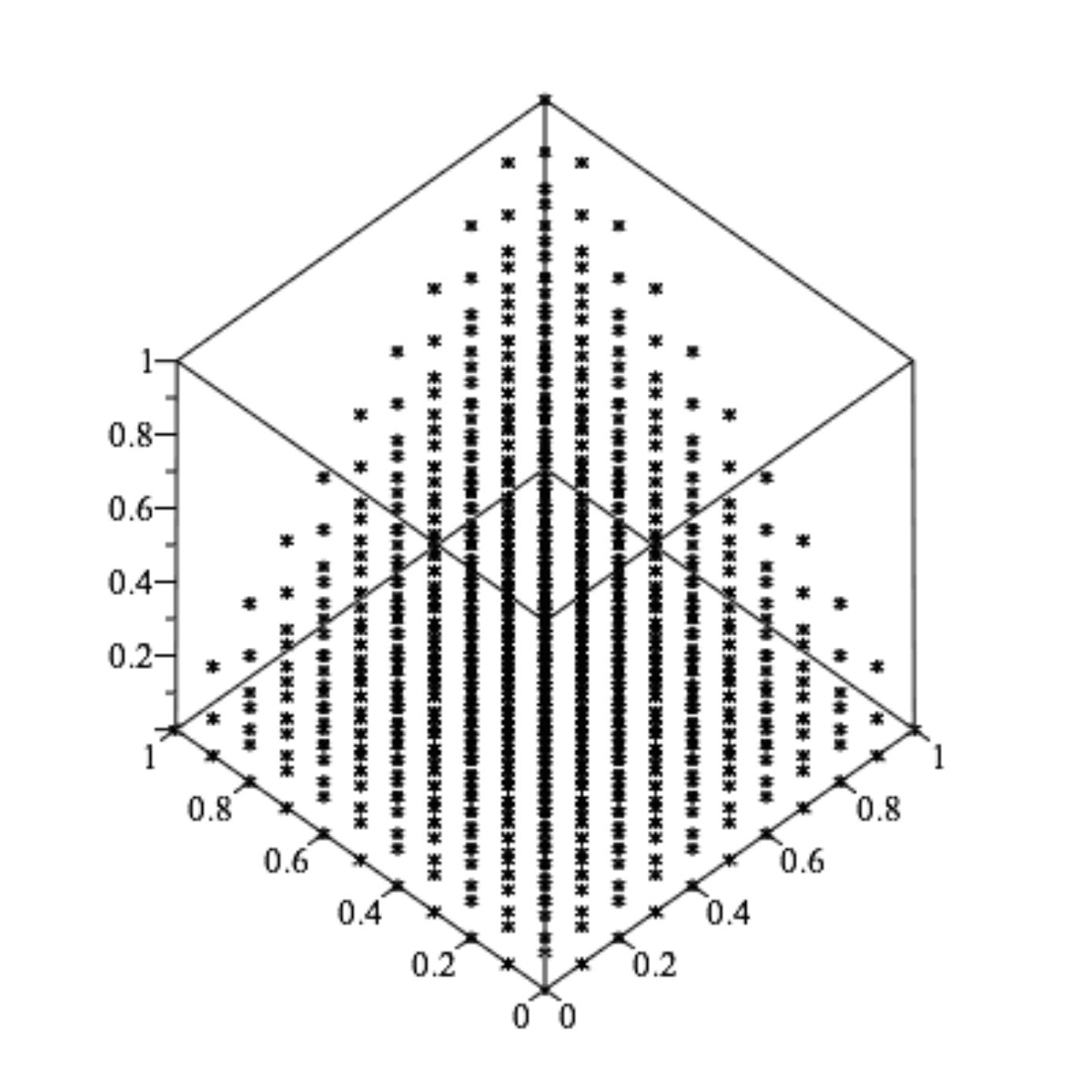} & \includegraphics[width=6cm]{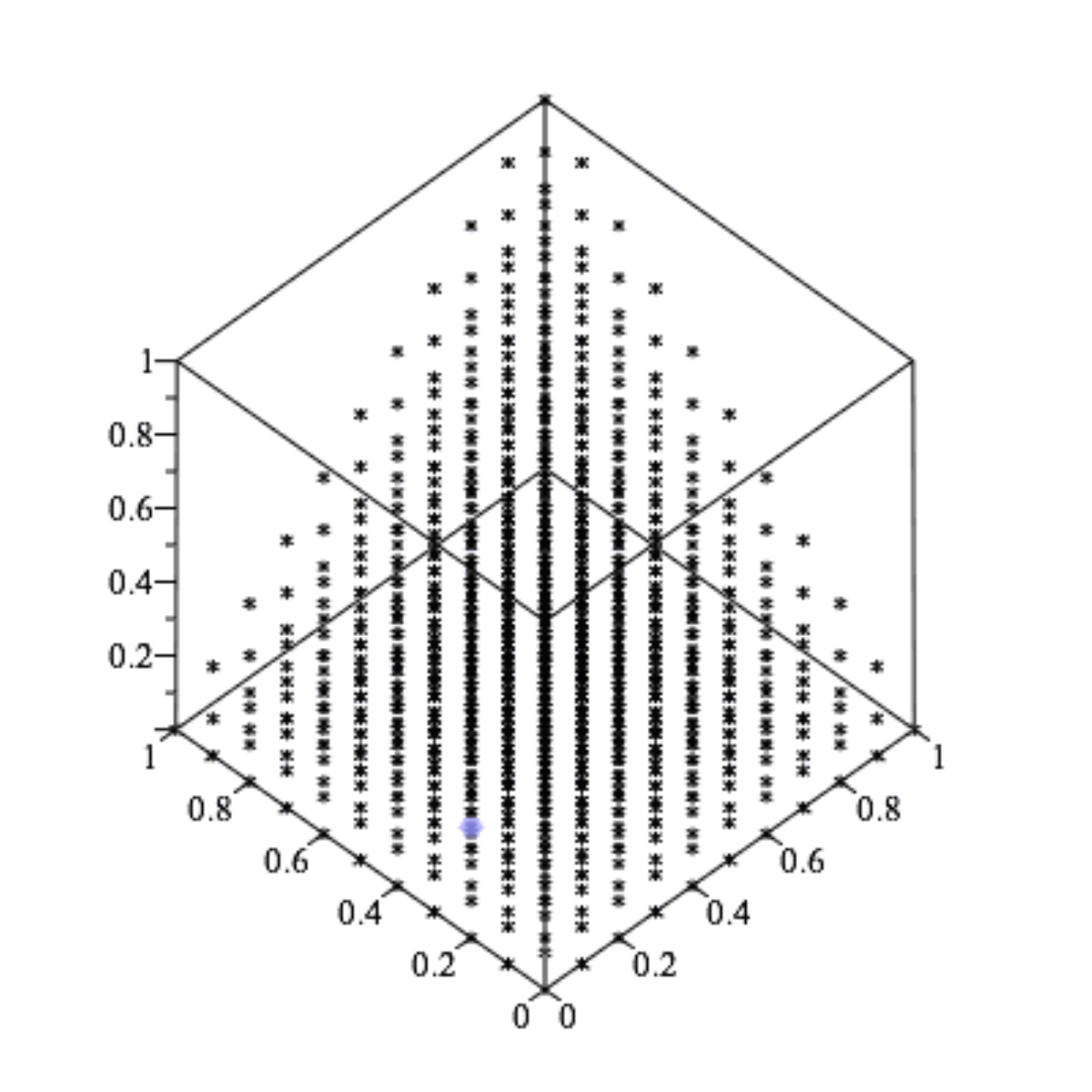}
\\
${\mathbf a}={\mathbf b}={\mathbf c}=\left( \frac12, \frac12\right)$ & ${\mathbf a}={\mathbf b}={\mathbf c}=\left( \frac15, \frac15, \frac15, \frac15, \frac15\right)$\\
\\
$\kappa_{12}=\alpha^{(12)}+\alpha^{(123)}$ &
$\kappa_{12}=\alpha^{(12)}+\alpha^{(123)}$ \\
\end{tabular}
\caption{$\kappa$ indices for the model $p$-$mix_n$ with $n=2$ (left) and $n=5$ (right).}\label{esempio1pmix}
\end{figure}

Finally, in Fig.~\ref{esempio2} we compare the plots for the $p$-$qI_n$ and the $p$-$mix_n$ models for $n=3$ and with different marginals. On the left side, there are the plots for equal and uniform marginals, while on the right side there are the plots for three different marginals. From the plots we can observe that both models are sensitive to the marginals. In the $p$-$qI_n$ model, asymmetric marginals yields more points with low values of pairwise $\kappa$'s, while in the  $p$-$mix_n$ model we have an opposite behavior.

\begin{figure}
\begin{tabular}{cc}
\includegraphics[width=6cm]{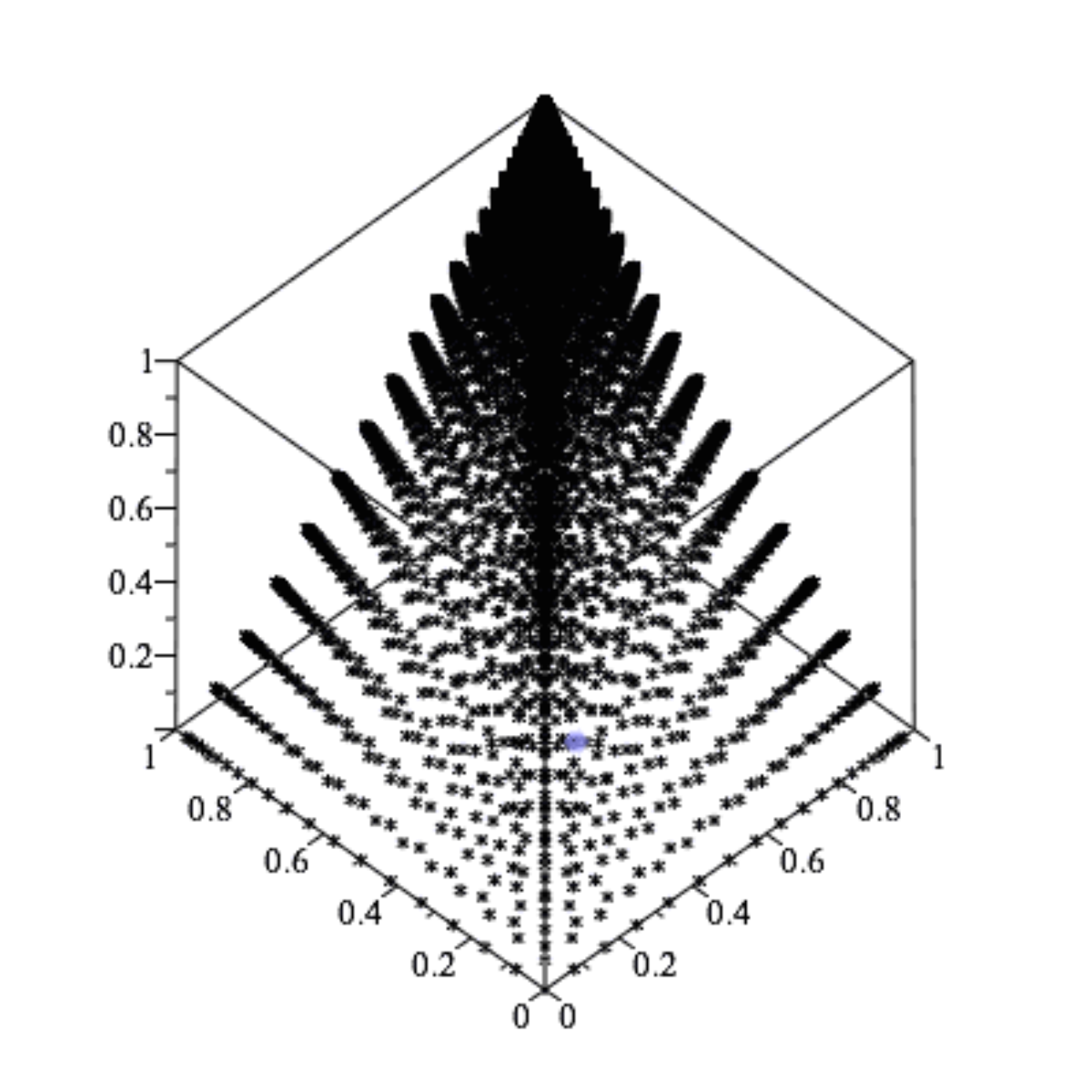} &\includegraphics[width=6cm]{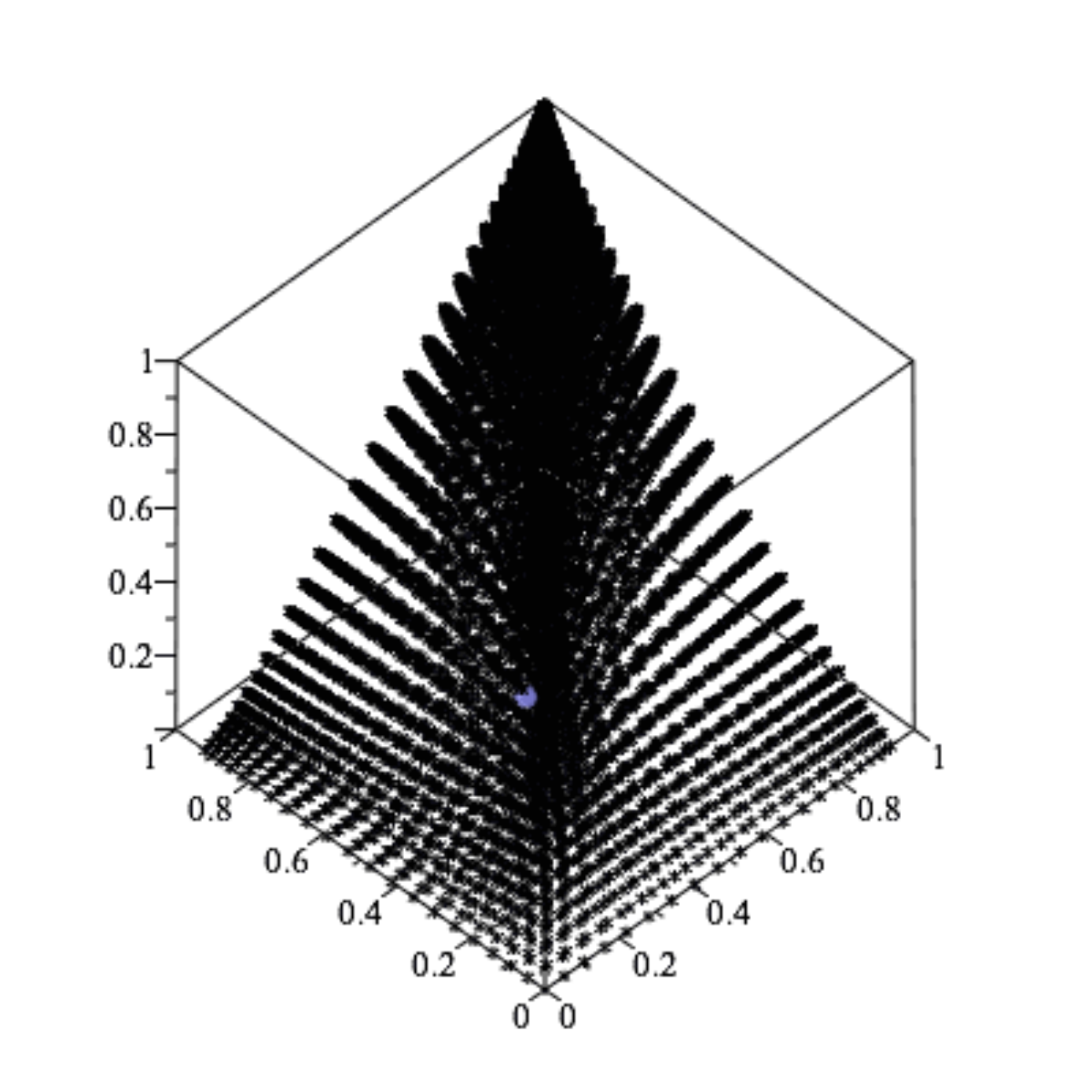}
\\
\includegraphics[width=6cm]{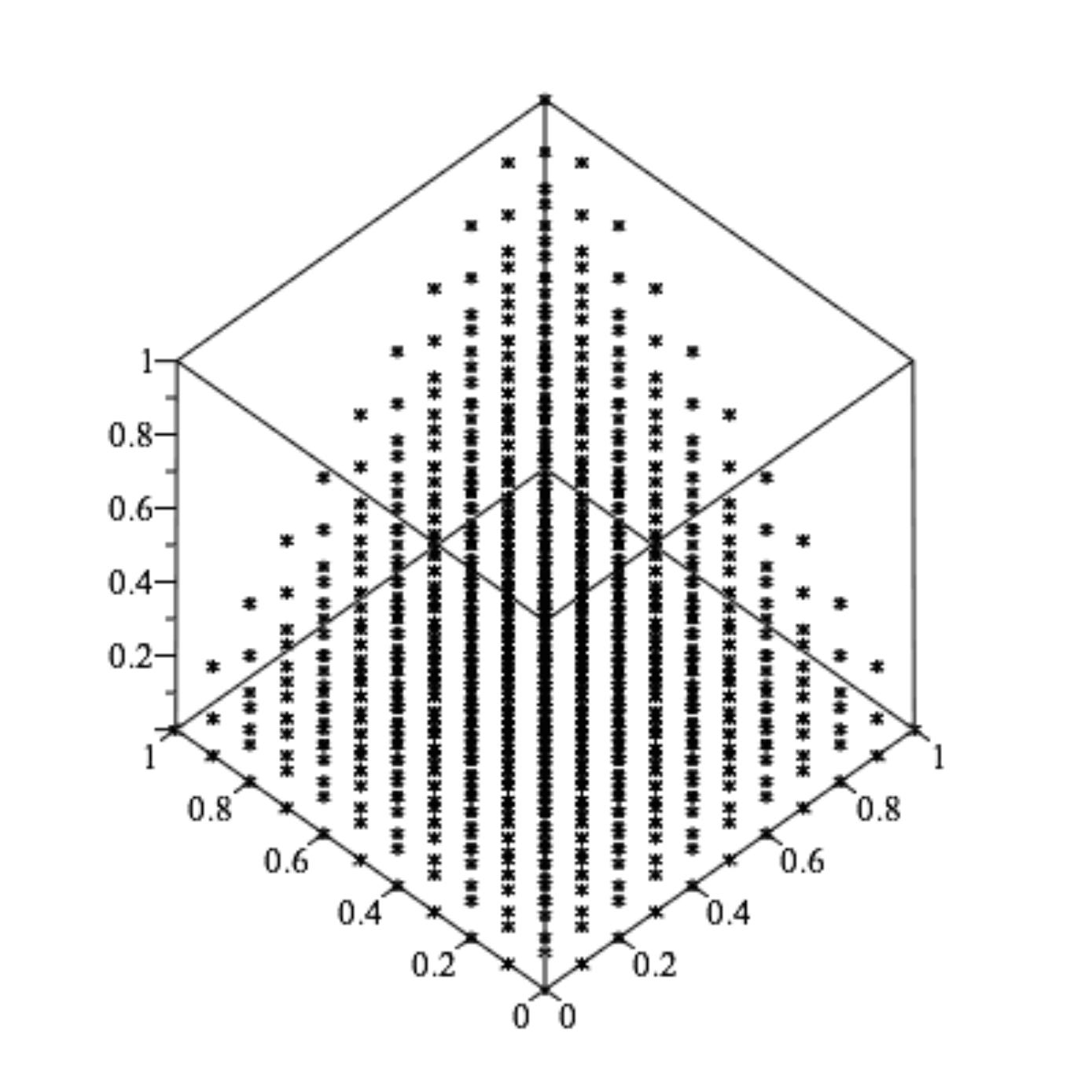} &\includegraphics[width=6cm]{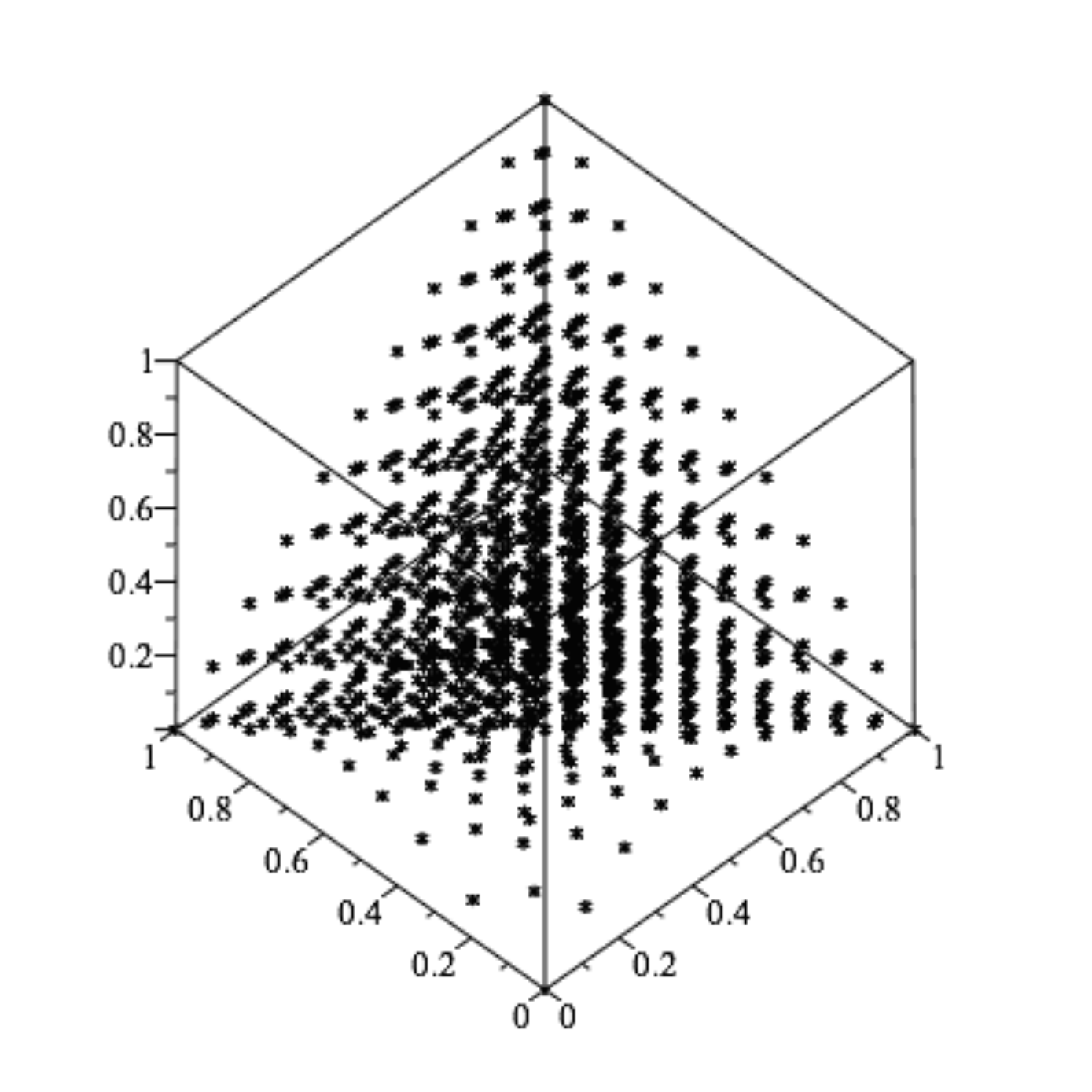} \\
${\mathbf a}={\mathbf b}={\mathbf c}=\left( \frac13, \frac13,\frac13 \right)$&
${\mathbf a}=\left( \frac{1}{10}, \frac{1}{10}, \frac{4}{5}\right)$\\
\\
& ${\mathbf b}=\left( \frac{1}{15}, \frac{2}{15}, \frac{4}{5}\right)$\\
\\
& ${\mathbf c}=\left( \frac{1}{6}, \frac{1}{6}, \frac{2}{3}\right)$
\end{tabular}
\caption{$\kappa$ indices for the model $p$-$qI_3$ (top) and the model $p$-$mix_3$ (bottom) with different values for ${\mathbf a}$, ${\mathbf b}$ and ${\mathbf c}$.}\label{esempio2}
\end{figure}

\section{Discussion}\label{discussion}

The work presented in this paper aims at providing a more thorough algebraic and geometric understanding of some models for the analysis of rater agreement. We have defined a class of toric models, with nice algebraic properties, and a class of mixture models, where the interpretation of the model parameters in terms of the pairwise Cohen's $\kappa$ is easy.

Under the point of view of Algebraic Statistics, the use of Cohen's $\kappa$ has been discussed in \cite{rapallo:05}, but only in the case of hypothesis testing, i.e., to test a null $\kappa$ versus a non-null $\kappa$. This test is particularly simple also within Algebraic Statistics, since the null hypothesis consists in the complete independence model. The inference under the complete independence model was one of the fist examples in Algebraic Statistics, and the corresponding invariants (i.e., the Markov basis) are fully characterized. When the problem is to measure the agrement among raters, the use of $\kappa$ is problematic and there is not unanimous consensus on the usefulness of kappa statistics to assess rater agreement. A better knowledge of the algebra and geometry of such index under the most common statistical models used in this field is a step towards a better use of the $\kappa$. While for the case of two raters some results can be found in \cite{schuster:01}, this paper extends the analysis to the case of three raters. Future work will include the extension and the analysis of the statistical models introduced here in the case of more than three raters.

\section*{Acknowledgements} We wish to thank the Institut Montpellierain Alexander Grothendieck, for the hospitality during the Workshop on Algebraic Statistics where this paper was written. The first author is a member of the INdAM-GNSAGA. The second author is a member of the INdAM-GNAMPA.

\bibliographystyle{plain}
\bibliography{biblio_BR}

\section*{Appendix A. Procedures in Maple${}^\mathrm{TM}$}

The following Maple${}^\mathrm{TM}$ procedures \texttt{pqI} and \texttt{pmix} compute the $\kappa$ indices for the models $p$-$qI_n$ and $p$-$mix_n$ respectively and plots a certain number (defined by the user) of points $(\kappa_{12},\kappa_{13},\kappa_{23})$.
The procedures take in input the size $n$ of the tensor, the vectors ${\mathbf a}$, ${\mathbf b}$, ${\mathbf c}$ and the number \texttt{step}, and return a matrix whose $(i,j)$-entry is $\kappa_{ij}$.
The number \texttt{step} behaves in a different way for the two procedures.
For $p$-$qI_n$, the procedure \texttt{pqI} plots the points $(\kappa_{12},\kappa_{13},\kappa_{23})$ as $\gamma^{(12)}$, $\gamma^{(13)}$ and $\gamma^{(23)}$ take values $10^0, 10^{0.1}, 10^{0.2}, \dots, 10^{step}$.
On the other hand, for $p$-$mix_n$,  the procedure \texttt{pmix} plots the points $(\kappa_{12},\kappa_{13},\kappa_{23})$ as $\alpha^{(0)}, \alpha^{(12)}, \alpha^{(13)}, \alpha^{(23)}, \alpha^{(123)}$ take values $0, 0+step, 0+2step, \dots, 1$ and sum to $1$. Hence, in this case the value in input for \texttt{step} must be of the form $\frac{1}{t}$, for $t$ a positive integer.

{\small{
\begin{verbatim}
with(plottools); with(plots);
pqI := proc (n, A, B, C, step) local i, j, k, p, psum, V1, V2, V3,
       T1, T2, T2l, T2r, kgen, k12, k13, k23, KM, counter, t, L1, L2, L3, G;

#creation of the parameterization
for i to n do
   for j to n do
      for k to n do
         if i = j and i <> k then
            p[i, j, k] := gamma[12]*A[i]*B[j]*C[k];
         end if;
         if i = k and i <> j then
            p[i, j, k] := gamma[13]*A[i]*B[j]*C[k];
         end if;
         if j = k and i <> j then
            p[i, j, k] := gamma[23]*A[i]*B[j]*C[k];
         end if;
         if i = j and i = k then
            p[i, j, k] := gamma[12]*gamma[13]*gamma[23]*A[i]*B[j]*C[k];
         end if;
         if i <> j and i <> k and j <> k then
            p[i, j, k] := A[i]*B[j]*C[k];
         end if;
      end do;
   end do;
end do;

psum := 0;
for i to n do
   for j to n do
      for k to n do
         psum := psum+p[i, j, k];
      end do;
   end do;
end do;

#definition of marginalization terms
V1 := Matrix(n);
V2 := Matrix(n);
V3 := Matrix(n);
for i to n do
   for j to n do
      for k to n do
         V1[i, j] := V1[i, j]+p[k, i, j];
         V2[i, j] := V2[i, j]+p[i, k, j];
         V3[i, j] := V3[i, j]+p[i, j, k];
      end do;
   end do;
end do;

#definition of  terms of kappa
T1 := 0;
for i to n do
   T1 := T1+tp[i, i];
end do;
T2 := 0;
for i to n do
   T2l := 0;
   T2r := 0;
   for j to n do
      T2l := T2l+tp[i, j];
      T2r := T2r+tp[j, i];
   end do;
   T2 := T2+T2l*T2r;
end do;

kgen := (psum*T1-T2)/(psum^2-T2);

#marginalization over first index
k23 := simplify(subs([seq(seq(tp[i, j] = V1[i, j], i = 1 .. n), j = 1 .. n)],
     kgen));

#marginalization over second index
k13 := simplify(subs([seq(seq(tp[i, j] = V2[i, j], i = 1 .. n), j = 1 .. n)],
    kgen));

#marginalization over third index
k12 := simplify(subs([seq(seq(tp[i, j] = V3[i, j], i = 1 .. n), j = 1 .. n)],
    kgen));

KM := Matrix([[0, k12, k13], [k12, 0, k23], [k13, k23, 0]]);

#creating plot
counter := 1;
for t[12] from 0 by .1 to step do
   for t[13] from 0 by .1 to step do
      for t[23] from 0 by .1 to step do
         L1 := subs([gamma[12] = 10^t[12], gamma[13] = 10^t[13],
                            gamma[23] = 10^t[23]], k12);
         L2 := subs([gamma[12] = 10^t[12], gamma[13] = 10^t[13],
                            gamma[23] = 10^t[23]], k13);
         L3 := subs([gamma[12] = 10^t[12], gamma[13] = 10^t[13],
                            gamma[23] = 10^t[23]], k23);
         G[counter] := plottools:-point([L1, L2, L3], color = black,
                             symbol = cross, symbolsize = 15);
         counter := counter+1;
      end do;
   end do;
end do;

#printing plot
print(plots:-display(seq(G[i], i = 1 .. counter-1), axes = boxed,
                  orientation = [45, 135], view = [0 .. 1, 0 .. 1, 0 .. 1]));

#return matrix
return (KM);
end proc;
\end{verbatim}
}}
\vskip0.2cm
{\small{
\begin{verbatim}
pmix := proc (n, A, B, C, step) local i, j, k, p, psum, V1, V2, V3,
        T1, T2, T2l, T2r, kgen, k12, k13, k23, KM, counter, t, L1, L2, L3, G;

#creation of the parameterization
for i to n do
   for j to n do
      for k to n do
         if i = j and i <> k then
            p[i, j, k] := alpha[0]*A[i]*B[j]*C[k]+alpha[12]/n^2;
         end if;
         if i = k and i <> j then
            p[i, j, k] := alpha[0]*A[i]*B[j]*C[k]+alpha[13]/n^2;
         end if;
         if j = k and i <> j then
            p[i, j, k] := alpha[0]*A[i]*B[j]*C[k]+alpha[23]/n^2;
         end if;
         if i = j and i = k then
            p[i, j, k] := alpha[0]*A[i]*B[j]*C[k]+alpha[12]/n^2
                 +alpha[13]/n^2+alpha[23]/n^2+alpha[123]/n;
          end if;
          if i <> j and i <> k and j <> k then
               p[i, j, k] := alpha[0]*A[i]*B[j]*C[k];
          end if;
      end do;
   end do;
end do;
psum := 0;
for i to n do
   for j to n do
      for k to n do
         psum := psum+p[i, j, k];
      end do;
   end do;
end do;

#definition of marginalization terms
V1 := Matrix(n);
V2 := Matrix(n);
V3 := Matrix(n);
for i to n do
   for j to n do
      for k to n do
         V1[i, j] := V1[i, j]+p[k, i, j];
         V2[i, j] := V2[i, j]+p[i, k, j];
         V3[i, j] := V3[i, j]+p[i, j, k];
      end do;
   end do;
end do;

#definition of  terms of kappa
T1 := 0;
for i to n do
   T1 := T1+tp[i, i];
end do;
T2 := 0;
for i to n do
   T2l := 0;
   T2r := 0;
   for j to n do
      T2l := T2l+tp[i, j];
      T2r := T2r+tp[j, i];
   end do;
   T2 := T2+T2l*T2r;
end do;
kgen := (psum*T1-T2)/(psum^2-T2);

#marginalization over first index
k23 := simplify(subs([seq(seq(tp[i, j] = V1[i, j], i = 1 .. n), j = 1 .. n)],
     kgen));

#marginalization over second index
k13 := simplify(subs([seq(seq(tp[i, j] = V2[i, j], i = 1 .. n), j = 1 .. n)],
    kgen));

#marginalization over third index
k12 := simplify(subs([seq(seq(tp[i, j] = V3[i, j], i = 1 .. n), j = 1 .. n)],
    kgen));

KM := Matrix([[0, k12, k13], [k12, 0, k23], [k13, k23, 0]]);

#creating plot
counter := 1;
for t[0] from 0 by step to 1 do
   for t[1] from 0 by step to 1-t[0] do
      for t[2] from 0 by step to 1-t[0]-t[1] do
         for t[3] from 0 by step to 1-t[0]-t[1]-t[2] do
            L1 := subs([alpha[0] = t[0], alpha[12] = t[1], alpha[13] = t[2],
                  alpha[23] = t[3], alpha[123] = 1-t[0]-t[1]-t[2]-t[3]], k12);
            L2 := subs([alpha[0] = t[0], alpha[12] = t[1], alpha[13] = t[2],
                  alpha[23] = t[3], alpha[123] = 1-t[0]-t[1]-t[2]-t[3]], k13);
            L3 := subs([alpha[0] = t[0], alpha[12] = t[1], alpha[13] = t[2],
                  alpha[23] = t[3], alpha[123] = 1-t[0]-t[1]-t[2]-t[3]], k23);
            G[counter] := plottools:-point([L1, L2, L3], color = black,
                                symbol = cross, symbolsize = 15);
            counter := counter+1;
         end do;
      end do;
   end do;
end do;

#printing plot
print(plots:-display(seq(G[i], i = 1 .. counter-1), axes = boxed,
                  orientation = [45, 135], view = [0 .. 1, 0 .. 1, 0 .. 1]));

#return matrix
return (KM);
end proc;
\end{verbatim}
}}

\end{document}